\theoremstyle{plain}
\newtheorem{theorem}{Theorem}
\newtheorem{proposition}[theorem]{Proposition}
\newtheorem{corollary}[theorem]{Corollary}
\theoremstyle{remark}
\newtheorem{remark}{Remark}
\theoremstyle{definition}
\newtheorem{definition}{Definition}
\title{Representation and Integration by Parts Formulas\\for Affine Processes}
\author{Arturo Kohatsu-Higa,$^{1}$ Yuma Tamura$^{2}$\\
\\
Department of Mathematical Sciences \\
College of Science and Engineering \\
Ritsumeikan University \\
1-1-1 Noji-higashi, Kusatsu, 525-8577, JAPAN \\
Email: khts00@fc.ritsumei.ac.jp,$^{1}$ ytamura11029@gmail.com$^{2}$}
\date{}
\begin{document}

\maketitle

\begin{abstract}
    Affine processes play an important role in mathematical finance and other applied areas due to their tractable structure. In the present article, we derive probabilistic representations and integration by parts (IBP) formulas for expectations involving affine processes. These formulas are expressed in terms of expectations of affine processes with modified parameters and are derived using Fourier analytic techniques and characteristic functions. Notably, our method does not require pathwise differentiability, allowing us to handle models with square-root diffusion coefficients for a large set of parameters. The methodology can be applied to the classic Cox--Ingersoll--Ross (CIR) model, a model for interest rates in mathematical finance, where the initial value derivative corresponds to one of the ``Greeks'' used in option pricing in mathematical finance. Furthermore, we illustrate the theory with an application to a population evolution model arising as a scaling limit of discrete branching processes. Our approach offers a unified and robust framework for sensitivity analysis in models where classical Malliavin calculus techniques are difficult to apply.
\end{abstract}

{\small 
Keywords: Greeks, Differentiation, Affine Processes, Characteristic Functions, CIR model
}

\section{Introduction}
Affine processes are widely used across applied fields, including but not limited to mathematical
finance. For references on the subject, we refer to \cite{Alfonsi}.
In the present article, we are interested in providing probabilistic representations for derivatives of expectations and the integration by parts formulas associated to these processes. We believe that these formulas are suitable for simulation and, potentially, for calibration, even when the payoff lacks smoothness. 

In more explicit terms, given an affine process $X^x$ starting at $x$ and a test function $f$, we are interested in obtaining representation formulas of the type 
\[
\partial_xE[f(X^x_t)]=E[g(\tilde{X}^x_t)].
\]
Here $g$ is a function which depends on $f$ and the parameters of the affine process. Similarly, $\tilde{X}^x$ represents a vector of affine process whose parameters are related but not necessarily the same as $X^x$. 

In particular, we remark that many available representations of the above type rely on pathwise differentiation, which is delicate or even infeasible for square-root diffusions. One of these cases corresponds to affine processes where the diffusion coefficient is the square root function. Its path differentiation has been previously obtained in the one dimensional case under certain restrictive parameter restrictions (see e.g. \cite{Alos}).  For Bessel processes we refer the reader to \cite{Altman}.
Here, using a different technique based on characteristic functions, we obtain slightly more general results that hints to a generalization of the concept of differentiation.

In many situations, one is also interested in differentiation with respect to various parameters. In order to explain the ideas in simple terms, we concentrate on the derivatives with respect to $x$. Other examples are also considered but the methodology is the same with some additional considerations. For example, we also compute for simplicity the derivative with respect to $\beta $.

In Section \ref{sec:pre}, we provide the general set-up of our work. In Section \ref{sec:3}, we provide first the characteristic function characterization of affine processes and derive our main results concerning the derivative of expectations of functionals of affine processes and their integration by parts. 
In Section \ref{sec:3.2}, we consider as an example, derivatives with respect to one of the parameters in the affine model. In Section \ref{sec:4}, we consider a simple multi-dimensional generalization and finally in Section \ref{sec:5}, we consider an application to the CIR model in finance and an application to a limit model of population evolution.

\section{Preliminaries}
\label{sec:pre}
We use the following notation for sets of numbers:
\begin{align*}
    \mathbb{R} &:= \text{(The set of all real numbers)},  \\
    \mathbb{R}_{\ge0} &:= \{ x \in \mathbb{R} \mid x \ge 0 \},  \\
    \mathbb{R}_{>0} &:= \{ x \in \mathbb{R} \mid x > 0 \},  \\
    \mathbb{C} &:= \text{(The set of all complex numbers)}.
\end{align*}
We also set
\[
    D := \mathbb{R}_{>0} \times \mathbb{R} \times \mathbb{R}_{\ge0}.
\]
In addition, we use the following notation for spaces of functions:
\begin{align*}
    L^1(\mathbb{R}) &:= \text{(The set of all Lebesgue-integrable functions on \(\mathbb{R}\))}, \\
    L^1( \Omega ) &:= \text{(The set of all random variables which have finite expectation)}.
\end{align*}

Now, we introduce \emph{affine processes}. Roughly speaking, one-dimensional affine processes are defined as follows:
\begin{definition}
    A one-dimensional Markov process $(X^x)_x$, where \(x\) represents its starting point, is called \emph{affine} if for every $ t \in \mathbb{R}_{>0} $, the characteristic function $ \theta \mapsto E[ \exp( i \theta X^x_t ) ] \colon \mathbb{R} \to \mathbb{C} $ has exponential-affine dependence on $x$. More precisely, if there exist functions $ g,h \colon [0,\infty) \times \mathbb{R} \to \mathbb{C} $ such that $ E[ \exp( i \theta X^x_t ) ] $ has the following form:
    \begin{align}
    \label{eq:cf}
      E[ \exp( i \theta X^x_t ) ] = \exp( g(t,\theta) + x h(t,\theta) ).  
    \end{align}
\end{definition}
We do not reproduce the full general definition, as we focus on diffusion-type affine processes; for an exact definition, see \cite{DFS2003} and \cite{Alfonsi} for instance.

Now, for fixed $ ( \alpha, \beta, b ) \in D $, consider the following one-dimensional stochastic differential equation (SDE) of \( \mathbb{R}_{\ge0} \)-valued process for \( x \in \mathbb{R}_{\ge0} \):
\begin{equation}\label{1dimAffineSDE}
    dX^x_t = \sqrt{ \alpha X^x_t } \,dW_t + ( \beta X^x_t + b )dt,\quad X^x_0=x.
\end{equation} 

This SDE admits a unique solution in law and it is known that the family $ (X^x)_x $ is an affine process (See e.g. \cite{Alfonsi}, Section 1.3). Thus we call the solution of the SDE (\ref{1dimAffineSDE}) \emph{affine diffusion with parameter $ ( \alpha, \beta, b ) $}.

Although not directly related to the main discussion, the behavior at the boundary \( 0 \) is as follows (See e.g. \cite{JYC2009}, Chapter 6).

(i) If \( 2b / \alpha = 0 \) i.e. \( b=0 \), \( X^0_t = 0 \) for all \(t\) a.s. For \( x \in \mathbb{R}_{>0} \), \( X^x \) hits \( 0 \) in finite time a.s. if \( \beta >0 \) and hits \( 0 \) with probability \( p \in (0,1) \) if \( \beta \le 0 \). In both cases, \(X^x\) is absorbed at the boundary \( 0 \). That is, if \( X^x_t(\omega) = 0 \), then \( X^x_s(\omega) = 0 \) for all \( s > t \).

(ii) If \( 0 < 2b / \alpha < 1 \), \( X^x \) hits \( 0 \) in finite time a.s. for all \( x \in \mathbb{R}_{\ge0} \) and reflects instantaneously. That is, the Lebesgue measure of \( \{ t \ge 0 \mid X^x_t(\omega) = 0 \} \) is \( 0 \) for almost all \( \omega \) for all \( x \in \mathbb{R}_{\ge0} \).

(iii) If \( 2b / \alpha \ge 1 \), \( X^x \) never hits \( 0 \) (except for the particular case that the process starts at \( x=0 \) and we consider the time \(t=0\)) a.s. for all \( x \in \mathbb{R}_{\ge0} \).

In the proofs to follow, we use Fourier transforms. For \( f \in L^1(\mathbb{R}) \), we denote its Fourier transform by \( \widehat{f} \), and adopt the following definition:
\[
    \widehat{f}(\theta) := \int_{\mathbb{R}} f(y) e^{ - i \theta y } \,dy, \quad \theta \in \mathbb{R}.
\]
In this case, if \( \widehat{f} \in L^1(\mathbb{R}) \), it holds that
\[
    f(y) = \frac{1}{2\pi} \int_{\mathbb{R}} \widehat{f}(\theta) e^{ i \theta y } \,d\theta, \quad y\in\mathbb{R}.
\]

\section{Derivatives of expectations of affine diffusions}
\label{sec:3}
It is known that the function $ h $ in the characteristic function \eqref{eq:cf} corresponding to the SDE (\ref{1dimAffineSDE}) satisfies the following ordinary differential equation (See \cite{DFS2003}, Theorem 2.7 for example):
\begin{equation}\label{1dimODE}
    \partial_t h(t,\theta) = \frac{ \alpha }{2} h(t,\theta)^2 + \beta h(t,\theta), \qquad h(0,\theta) = i \theta.
\end{equation}
Furthermore, $ g $ is given by the integral of $h$:
\[
    g(t,\theta) = \int_0^t b h( s,\theta ) \,ds.
\]
In this case, we can solve the differential equation (\ref{1dimODE}) explicitly, and the solution is
\[
    h(t,\theta) = \begin{dcases}
        \frac{ 2 i \theta }{ 2 - \alpha i \theta t }; &\text{ if } \beta = 0,\\
        \frac{ 2 \beta e^{ \beta t } i \theta }{ 2 \beta - ( e^{ \beta t } -1 ) \alpha i \theta };&\text{ if } \beta \neq 0.
    \end{dcases}
    \]
Moreover,
\[
    g(t,\theta) = 
    \begin{dcases}
       \frac{ 2b }{ \alpha } \mathop{\mathrm{Log}} \frac{2}{ 2 - \alpha i \theta t }; &\text{ if } \beta = 0,\\
       \frac{2b}{\alpha} \mathop{\mathrm{Log}} \frac{ 2\beta }{ 2 \beta - ( e^{ \beta t } - 1 ) \alpha i \theta };
       &\text{ if } \beta \neq  0.
    \end{dcases}
\]
Here, \( \mathrm{Log} \) denotes the principal value of the complex logarithm, that is, the imaginary part of \( \mathop{\mathrm{Log}} z \) lies in the interval \( ( -\pi, \pi ] \).

Thus, the characteristic function of \( X^x_t \) is given as follows.
\[
    E[ \exp( i \theta X^x_t ) ] = 
    \begin{dcases}
      \biggl( \frac{2}{ 2 - \alpha i \theta t } \biggr)^{2b/\alpha} \exp\biggl( \frac{ 2 i \theta }{ 2 - \alpha i \theta t } x \biggr);  &\text{ if } \beta = 0,\\
      \biggl( \frac{ 2\beta }{ 2 \beta - ( e^{ \beta t } - 1 ) \alpha i \theta } \biggr)^{2b/\alpha} \exp\biggl( \frac{ 2 \beta e^{ \beta t } i \theta }{ 2 \beta - ( e^{ \beta t } -1 ) \alpha i \theta } x \biggr);
      &\text{ if } \beta \neq 0.
    \end{dcases}
\]
Hereafter, throughout this paper, for a complex number \( \alpha \), its real power \( \alpha^c \) is understood to represent \( e^{ c \mathop{\mathrm{Log}} \alpha } \).
Note that the limit of \( E[ \exp( i \theta X^x_t ) ] \) as \( \beta \to 0 \) coincides with the case \( \beta = 0 \). That is, \( \beta \mapsto E[ \exp( i \theta X^x_t ) ] \) is continuous at \(0\).

\subsection{Derivative with respect to the initial value}

Now, our first theorem is about the derivative of $ E[ f(X^x_t) ] $ with respect to the initial value $x$.
\begin{theorem}\label{delta}
    Let $ t \in \mathbb{R}_{>0} $ and \( (\alpha,\beta,b) \in D \). Moreover, let $ X^x $ and $ X^{1,x} $ be affine diffusions with parameters $ (\alpha,\beta,b) $ and $ (\alpha,\beta,b+\alpha/2) $ respectively and initial value $ x \in \mathbb{R}_{>0} $. Then, for $ f \in L^1(\mathbb{R}) $ such that \( \widehat{f}\in L^1(\mathbb{R}) \) and $ f(X^x_t), f(X^{1,x}_t)\in L^1(\Omega) $, it holds that
    \[
        \partial_x E[ f( X^x_t ) ] =
        \begin{dcases}
            \frac{2}{ \alpha t } \bigl( E[ f( X^{1,x}_t) ] - E[ f(  X^x_t  ) ] \bigr);    & \text{ if }  \beta=0,    \\
            \frac{ 2\beta e^{ \beta t } }{ \alpha ( e^{ \beta t } -1 ) } \bigl( E[ f( X^{1,x}_t) ] - E[ f( X^x_t ) ] \bigr);  &\text{ if }  \beta \neq 0.
        \end{dcases}
    \]
\end{theorem}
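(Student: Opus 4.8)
The strategy is to pass to the Fourier side, differentiate the explicit characteristic function in $x$, and recognize the resulting expression as a linear combination of characteristic functions of affine diffusions with shifted parameters. First I would write, using the Fourier inversion formula for $f$ (legitimate since $f,\widehat f\in L^1(\mathbb R)$) and Fubini (justified by the integrability assumptions together with $|E[\exp(i\theta X^x_t)]|\le 1$),
\[
    E[f(X^x_t)] = \frac{1}{2\pi}\int_{\mathbb R}\widehat f(\theta)\,E[\exp(i\theta X^x_t)]\,d\theta
    = \frac{1}{2\pi}\int_{\mathbb R}\widehat f(\theta)\,e^{g(t,\theta)+xh(t,\theta)}\,d\theta.
\]
Then I would differentiate under the integral sign in $x$, which brings down a factor $h(t,\theta)$, so that
\[
    \partial_x E[f(X^x_t)] = \frac{1}{2\pi}\int_{\mathbb R}\widehat f(\theta)\,h(t,\theta)\,e^{g(t,\theta)+xh(t,\theta)}\,d\theta.
\]

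The key algebraic step is the identity, for the affine diffusion with parameter $(\alpha,\beta,b)$,
\[
    h(t,\theta) = c_\beta(t)\bigl(e^{\Delta g(t,\theta)} - 1\bigr),
\]
where $\Delta g(t,\theta) := g_1(t,\theta) - g(t,\theta)$ is the difference between the $g$-functions of the parameter sets $(\alpha,\beta,b+\alpha/2)$ and $(\alpha,\beta,b)$, and $c_\beta(t) = \tfrac{2}{\alpha t}$ when $\beta=0$, respectively $c_\beta(t) = \tfrac{2\beta e^{\beta t}}{\alpha(e^{\beta t}-1)}$ when $\beta\neq 0$. Concretely, since $g$ depends on $b$ only through the prefactor $2b/\alpha$, shifting $b\mapsto b+\alpha/2$ adds exactly $\mathrm{Log}\bigl(2/(2-\alpha i\theta t)\bigr)$ (resp. $\mathrm{Log}\bigl(2\beta/(2\beta-(e^{\beta t}-1)\alpha i\theta)\bigr)$) to $g$, so $e^{\Delta g(t,\theta)}-1$ equals $\tfrac{\alpha i\theta t}{2-\alpha i\theta t}$ (resp. $\tfrac{(e^{\beta t}-1)\alpha i\theta}{2\beta-(e^{\beta t}-1)\alpha i\theta}$); multiplying by $c_\beta(t)$ recovers $h(t,\theta)$ from its explicit formula. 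Substituting this identity and noting that the $h$-function is unchanged under $b\mapsto b+\alpha/2$ (it does not involve $b$), I get
\[
    \partial_x E[f(X^x_t)] = \frac{c_\beta(t)}{2\pi}\int_{\mathbb R}\widehat f(\theta)\Bigl(e^{g_1(t,\theta)+xh(t,\theta)} - e^{g(t,\theta)+xh(t,\theta)}\Bigr)\,d\theta = c_\beta(t)\bigl(E[f(X^{1,x}_t)] - E[f(X^x_t)]\bigr),
\]
where the last equality is again Fourier inversion, now applied to the characteristic function of $X^{1,x}_t$.

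The main obstacle is the analytic justification of differentiating under the integral sign: I need a dominating function for $\theta\mapsto \widehat f(\theta)h(t,\theta)e^{g(t,\theta)+xh(t,\theta)}$, locally uniformly in $x$. Here $\widehat f\in L^1$ helps, but $h(t,\theta)$ grows like a constant in $\theta$ (it is bounded: $|h(t,\theta)|$ stays bounded as $|\theta|\to\infty$ since the denominators grow linearly), $|e^{xh(t,\theta)}|\le 1$ because $\mathrm{Re}\,h(t,\theta)\le 0$ for $x\ge 0$ (this is where $x\in\mathbb R_{>0}$ and the sign structure of the affine ODE enter), and $|e^{g(t,\theta)}|\le 1$ since it is the modulus of the $b=0$ characteristic function. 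So $\widehat f(\theta)h(t,\theta)e^{g(t,\theta)+xh(t,\theta)}$ is dominated by $C|\widehat f(\theta)|\in L^1$, uniformly for $x$ in compacts, which legitimizes both the Fubini step and the differentiation. One should also check that the final integrals genuinely represent $E[f(X^{1,x}_t)]$ and $E[f(X^x_t)]$ rather than merely formal expressions; this follows from Fourier inversion exactly as in the first display, using the hypotheses $f(X^x_t),f(X^{1,x}_t)\in L^1(\Omega)$ to identify the values. Finally, I would remark that the $\beta\neq 0$ formula converges to the $\beta=0$ formula as $\beta\to 0$, consistent with the continuity noted just before the theorem.
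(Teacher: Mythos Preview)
Your proposal is correct and follows essentially the same route as the paper: write $E[f(X^x_t)]$ via Fourier inversion as $\tfrac{1}{2\pi}\int \widehat f(\theta)\,e^{g(t,\theta)+xh(t,\theta)}\,d\theta$, differentiate under the integral (justified by $|\widehat f|\in L^1$ together with the boundedness of $h$ and $|\,e^{g+xh}|\le 1$), and then use the algebraic identity that expresses $h(t,\theta)$ as $c_\beta(t)$ times the difference $e^{g_1}-e^g$ arising from the shift $b\mapsto b+\alpha/2$. The only cosmetic difference is that the paper packages the key identity as $i\theta=\tfrac{2}{\alpha t}\bigl(1-\tfrac{2-\alpha i\theta t}{2}\bigr)$ (respectively its $\beta\neq 0$ analog) after pulling out one power of $2/(2-\alpha i\theta t)$, whereas you phrase it as $h=c_\beta(t)(e^{\Delta g}-1)$; these are equivalent. (Minor slip: where you write ``the $b=0$ characteristic function'' you mean the $x=0$ one.)
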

\begin{proof}
    In the following, we denote the characteristic function of \( X^x_t \) (resp. \( X^{1,x}_t \)) by \( \varphi^x_t \) (resp. \(\varphi^{1,x}_t\)). Moreover, we denote the law of \( X^x_t \) (resp. \( X^{1,x}_t \)) by \( \nu^x_t \) (resp. \(\nu^{1,x}_{t}\)). Then, by using inverse Fourier transform,
    \begin{align}
        E[ f( X^x_t ) ] &= \int_{\mathbb{R}} f(y) \,\nu^x_t(dy) \notag\\
        &= \int_{\mathbb{R}} \frac{1}{2\pi} \int_{\mathbb{R}} \widehat{f}(\theta) e^{ i \theta y } \,d\theta \,\nu^x_t(dy)  \notag\\
        &= \frac{1}{2\pi} \int_{\mathbb{R}} \widehat{f}(\theta) \int_{\mathbb{R}} e^{ i \theta y } \,\nu^x_t(dy) \,d\theta  \label{characteristic_expression}\\
        &= \frac{1}{2\pi} \int_{\mathbb{R}} \widehat{f}(\theta) \, \varphi^x_t(\theta) \, d \theta.  \notag
    \end{align}
    Here, we have applied Fubini's theorem as \( (\theta,y) \mapsto \widehat{f}(\theta) e^{ i \theta y } \) is \( ( \mathrm{Leb} \times \nu^x_t ) \)-integrable by assumption, where \( \mathrm{Leb} \) denotes the Lebesgue measure.
    
    First, we consider the case \( \beta = 0 \). In this case,
    \[
        \varphi^x_t(\theta) = \biggl( \frac{2}{ 2 - \alpha i \theta t } \biggr)^{2b/\alpha} \exp\biggl( \frac{ 2 i \theta }{ 2 - \alpha i \theta t } x \biggr).
    \]
    Therefore,
    \[
        \partial_x \varphi^x_t(\theta) = \biggl( \frac{2}{ 2 - \alpha i \theta t } \biggr)^{2b/\alpha+1} i \theta \exp\biggl( \frac{ 2 i \theta }{ 2 - \alpha i \theta t } x \biggr).
    \]
    Moreover, we have
    \begin{align*}
        | \partial_x \varphi^x_t(\theta) | & \le \biggl| \frac{ 2 i \theta }{ 2 - \alpha i \theta t } \biggr| | \varphi^x_t(\theta) | \\
        &= \frac{ 2 | \theta | }{ \sqrt{ 4 + \alpha^2 \theta^2 t^2 } } | \varphi^x_t(\theta) | \\
        &\le \frac{ 2 | \theta | }{ \alpha t | \theta | } \cdot 1 \\
        &= \frac{2}{ \alpha t }.
    \end{align*}
    for \( \theta \ne 0 \). Thus,
    \[
        | \widehat{f}(\theta) \, \partial_x \varphi^x_t(\theta) | \le \frac{ 2 }{ \alpha t } | \widehat{f}(\theta) |,
    \]
    where \( \widehat{f}(\theta) \) is \( \mathrm{Leb} \)-integrable and does not depend on \(x\). Then, the mean value theorem and the dominated convergence theorem yield that
    \[
        \partial_x E[ f( X^x_t ) ] = \frac{1}{2\pi} \int_{\mathbb{R}} \widehat{f}(\theta) \partial_x \varphi^x_t(\theta) \, d \theta.
    \]
    Also, note that (\ref{characteristic_expression}) is true for \( X^{1,x}_t \), namely,
    \[
        E[ f( X^{1,x}_t ) ] = \frac{1}{2\pi} \int_{\mathbb{R}} \widehat{f}(\theta) \varphi^{1,x}_t(\theta) \, d \theta.
    \]
%

    We also have the relation
    \[
        i \theta = \frac{2}{ \alpha t } \biggl( 1 - \frac{ 2 - \alpha i \theta t }{2} \biggr),
    \]
    which implies that
    \begin{align*}
        \partial_x \varphi^x_t(\theta) &= \biggl( \frac{2}{ 2 - \alpha i \theta t } \biggr)^{2b/\alpha+1} i \theta \exp\biggl( \frac{ 2 i \theta }{ 2 - \alpha i \theta t } x \biggr)   \\
        &= \biggl( \frac{2}{ 2 - \alpha i \theta t } \biggr)^{2b/\alpha+1} \frac{2}{ \alpha t } \biggl( 1 - \frac{ 2 - \alpha i \theta t }{2} \biggr) \exp\biggl( \frac{ 2 i \theta }{ 2 - \alpha i \theta t } x \biggr)   \\
        &= \frac{2}{ \alpha t } \biggl( \biggl( \frac{2}{ 2 - \alpha i \theta t } \biggr)^{2b/\alpha+1} \exp\biggl( \frac{ 2 i \theta }{ 2 - \alpha i \theta t } x \biggr) - \biggl( \frac{2}{ 2 - \alpha i \theta t } \biggr)^{2b/\alpha} \exp\biggl( \frac{ 2 i \theta }{ 2 - \alpha i \theta t } x \biggr) \biggr)   \\
        &= \frac{2}{ \alpha t } ( \varphi^{1,x}_t( \theta ) - \varphi^x_t(\theta) ).
    \end{align*}
    This completes the argument for the case where \( \beta = 0 \).

    For the case where \( \beta \neq 0 \), the argument is similar to the one above, using the fact that
    \[
        i \theta = \frac{ 2 \beta }{ \alpha ( e^{ \beta t } - 1 ) } \biggl( 1 - \frac{ 2 \beta - ( e^{ \beta t } - 1 ) \alpha i \theta }{ 2 \beta } \biggr).
    \]
    Hence, the proof is omitted.
\end{proof}

\begin{remark}
The restrictions on the function $f$ in the above result can be weakened in different directions using approximations based on the regularity properties of the underlying processes. We do not enter into these technicalities in order to make our exposition simple. Just as a note, it is well known that if \( f \) is rapidly decreasing, then \( \widehat{f} \) is also rapidly decreasing and, in particular, integrable. Therefore using approximation functions within this class one may extend the class of functions $f$ for which the above result applies.
\end{remark}

\begin{remark}Obtaining the above formulas under different conditions surfaced in the process of this research. For example, the assumption ``\( \widehat{f} \) is integrable'' in the above Theorem can be dropped if we assume instead that \( 2b > \alpha \), which implies the integrability of both \( \varphi^x_t \) and \( \varphi^{1,x}_t \). {Also other formulations based on Plancherel theorem are possible which requires that either the densities of the underlying processes or their characteristic functions  are square integrable.}
\end{remark}

\begin{remark}
    As in the case of \( g \) and \( h \), the formula in Theorem \ref{delta} also agrees with the case \( \beta = 0 \) when we take the limit as \( \beta \to 0 \). The same holds for the subsequent results.
\end{remark}

Moreover, the following formula can also be obtained.

\begin{theorem}\label{IBP}
    Let $ t \in \mathbb{R}_{>0} $ and \( (\alpha,\beta,b-\alpha/2) \in D \). Moreover, let $ X^x $ and $ X^{-1,x} $ be affine diffusions with parameters $ (\alpha,\beta,b) $ and $ (\alpha,\beta,b-\alpha/2) $ respectively, and their initial values are both $ x \in \mathbb{R}_{>0} $. Then, for $ f \in C^1(\mathbb{R}) $ such that $ f $, $ f^\prime, \widehat{f},\widehat{f^\prime} \in L^1(\mathbb{R}) $ and $ f^\prime(X^x_t) $, $ f(X^x_t), f(X^{-1,x}_t) \in L^1( \Omega ) $, it holds that
    \[
        E[ f^\prime( X^x_t ) ] =
        \begin{dcases}
            \frac{2}{ \alpha t } \bigl( E[ f(X^x_t) ] - E[ f( X^{-1,x}_t ) ] \bigr);    &  \text{ if }  \beta=0,    \\
            \frac{ 2\beta }{ \alpha ( e^{ \beta t } -1 ) } \bigl( E[ f(X^x_t) ] - E[ f( X^{-1,x}_t ) ] \bigr);  & \text{ if } \beta \neq 0.
        \end{dcases}
    \]
\end{theorem}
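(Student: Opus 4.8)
The plan is to follow the same Fourier-analytic scheme as in the proof of Theorem~\ref{delta}, with one extra ingredient: the classical identity $\widehat{f'}(\theta)=i\theta\,\widehat f(\theta)$, which moves the factor $i\theta$ off the characteristic function and onto the transform side. Since here the derivative acts on $f$ rather than on $\varphi^x_t$, the relevant parameter shift is $b\mapsto b-\alpha/2$ (which lowers the exponent $2b/\alpha$) rather than $b\mapsto b+\alpha/2$, and this is why $X^{-1,x}$ appears in place of $X^{1,x}$.

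Concretely I would proceed as follows. First, since $f\in C^1(\mathbb R)$ with $f,f'\in L^1(\mathbb R)$, the function $f$ is absolutely continuous and satisfies $f(y)\to 0$ as $|y|\to\infty$ (otherwise $f\notin L^1(\mathbb R)$); hence integrating by parts in the integral defining $\widehat{f'}$ leaves no boundary term and gives $\widehat{f'}(\theta)=i\theta\,\widehat f(\theta)$ for every $\theta$. Second, applying the representation \eqref{characteristic_expression} to $f'$ — legitimate because $\widehat{f'}\in L^1(\mathbb R)$ and $f'(X^x_t)\in L^1(\Omega)$ — yields
\[
    E[f'(X^x_t)]=\frac{1}{2\pi}\int_{\mathbb R}\widehat{f'}(\theta)\,\varphi^x_t(\theta)\,d\theta
    =\frac{1}{2\pi}\int_{\mathbb R} i\theta\,\widehat f(\theta)\,\varphi^x_t(\theta)\,d\theta.
\]
Third, I would record the algebraic identity
\[
    i\theta\,\varphi^x_t(\theta)=
    \begin{dcases}
        \frac{2}{\alpha t}\bigl(\varphi^x_t(\theta)-\varphi^{-1,x}_t(\theta)\bigr), & \beta=0,\\
        \frac{2\beta}{\alpha(e^{\beta t}-1)}\bigl(\varphi^x_t(\theta)-\varphi^{-1,x}_t(\theta)\bigr), & \beta\neq 0,
    \end{dcases}
\]
obtained by multiplying the identity for $i\theta$ already used in the proof of Theorem~\ref{delta} (namely $i\theta=\tfrac{2}{\alpha t}\bigl(1-\tfrac{2-\alpha i\theta t}{2}\bigr)$ when $\beta=0$, and $i\theta=\tfrac{2\beta}{\alpha(e^{\beta t}-1)}\bigl(1-\tfrac{2\beta-(e^{\beta t}-1)\alpha i\theta}{2\beta}\bigr)$ when $\beta\neq 0$) by $\varphi^x_t(\theta)$, and using that the shift $b\mapsto b-\alpha/2$ lowers the exponent $2b/\alpha$ by one, so that $\tfrac{2-\alpha i\theta t}{2}\,\varphi^x_t(\theta)=\varphi^{-1,x}_t(\theta)$ (resp.\ $\tfrac{2\beta-(e^{\beta t}-1)\alpha i\theta}{2\beta}\,\varphi^x_t(\theta)=\varphi^{-1,x}_t(\theta)$). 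Finally, substituting this identity into the integral, splitting it (permissible since $\widehat f\in L^1(\mathbb R)$ and $|\varphi^x_t|,|\varphi^{-1,x}_t|\le 1$), and invoking \eqref{characteristic_expression} once more for $f$ against the laws $\nu^x_t$ and $\nu^{-1,x}_t$ gives the claimed formula.

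I do not expect a genuine obstacle: the argument is a variant of the proof of Theorem~\ref{delta}. The two points deserving a word of care are the vanishing of the boundary term when passing from $\widehat{f'}$ to $i\theta\,\widehat f$ — covered by $f,f'\in L^1(\mathbb R)$ and the absolute continuity of $f$ — and the absolute convergence of all the integrals together with the legitimacy of splitting, both of which follow from $\widehat f\in L^1(\mathbb R)$ and $|\varphi|\le 1$. The hypothesis $(\alpha,\beta,b-\alpha/2)\in D$ is exactly what ensures that $X^{-1,x}$ is a bona fide affine diffusion, hence that $\varphi^{-1,x}_t$ has the stated exponential-affine form with exponent $2(b-\alpha/2)/\alpha$.
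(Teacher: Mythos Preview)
Your proposal is correct and follows essentially the same route as the paper's proof: both start from the representation \eqref{characteristic_expression} applied to $f'$, use $\widehat{f'}(\theta)=i\theta\,\widehat f(\theta)$ to produce the factor $i\theta$, and then invoke the same algebraic identities for $i\theta$ as in Theorem~\ref{delta} to recognize the shifted characteristic function $\varphi^{-1,x}_t$. If anything, you are more explicit than the paper about the justification of $\widehat{f'}=i\theta\widehat f$ (via $f\to 0$ at infinity from $f,f'\in L^1$) and about why the integrals can be split, but the argument is the same.
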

%
\begin{proof}
    First of all, we have
    \begin{align*}
        E[ f^\prime( X^x_t ) ] &= \int_{\mathbb{R}} f^\prime(y) \, \nu^x_t(dy) \\
        &= \int_{\mathbb{R}} \frac{1}{2\pi} \int_{\mathbb{R}} \widehat{f^\prime}(\theta) e^{ i \theta y } \,d\theta \, \nu^x_t(dy)    \\
        &= \frac{1}{2\pi} \int_{\mathbb{R}} \int_{\mathbb{R}} \widehat{f^\prime}(\theta) e^{ i \theta y }  \, \nu^x_t(dy) \,d\theta    \\
        &= \frac{1}{2\pi} \int_{\mathbb{R}} i \theta \widehat{f}(\theta) \varphi^x_t(\theta) \,d\theta \\
        &= \frac{1}{2\pi} \int_{\mathbb{R}} \widehat{f}(\theta) i \theta \varphi^x_t(\theta) \,d\theta.
    \end{align*}
    
    By an argument analogous to that in the proof of Theorem \ref{delta}, we obtain the proof using the following relations
    \begin{align*}
        i \theta = &\frac{2}{ \alpha t } \biggl( 1 - \frac{ 2 - \alpha i \theta t }{2} \biggr);\quad \text{ if }  \beta = 0,\\
         i \theta = &\frac{ 2 \beta }{ \alpha ( e^{ \beta t } - 1 ) } \biggl( 1 - \frac{ 2 \beta - ( e^{ \beta t } - 1 ) \alpha i \theta }{ 2 \beta } \biggr); \quad \text{ if }  \beta \neq 0 .
    \end{align*}
\end{proof}

\begin{remark}
    The assumption ``\( \widehat{f} \) and \( \widehat{f^\prime} \) are integrable'' in the above Theorem can be dropped if we assume instead that \( b > \alpha \), which implies the integrability of the characteristic functions of \( X^x_t \) and \( X^{-1,x}_{t} \).
\end{remark}

\begin{remark}
    The assumption \( b - \alpha/2 \ge 0 \) is equivalent to the condition for \( X^x \) never to hit \(0\). See Section \ref{sec:pre}.
\end{remark}

\begin{remark}
    The above two theorems can be summarized in the following table.
    \begin{table}[htb]
    \centering
    \caption{The summary of Theorems \ref{delta} and \ref{IBP}}\label{xxx}%
    \begin{tabular}{@{}lll@{}}
    \toprule
    Case & Shift & Identity (schematic)\\
    \midrule
    \( \partial_x \) (Thm. \ref{delta}) & \( b \leadsto b + \alpha / 2 \) & \( \partial_x E[f(X^x_t)] = C_1( E[f( X^{1,x}_t )] - E[f(X^x_t)] ) \) \\
    IBP (Thm. \ref{IBP}) &\( b \leadsto b - \alpha / 2 \) &  \( E[f^\prime(X^x_t)] = C_2( E[f( X^{x}_t )] - E[f(X^{-1,x}_t)] ) \) \\
    \bottomrule
    \end{tabular}
    \end{table}

\end{remark}

Also, combining these two theorems, we have the following formula.

\begin{corollary}\label{affinecombined}
     Let $ t \in \mathbb{R}_{>0} $ and \( (\alpha,\beta,b) \in D \). Moreover, let $ X^x $ and $ X^{1,x} $ be affine diffusions with parameters $ (\alpha,\beta,b) $ and $ (\alpha,\beta,b+\alpha/2) $ respectively, and their initial values are $ x \in \mathbb{R}_{>0} $. Then, for $ f \in C^1(\mathbb{R}) $ such that $ f $, $ f^\prime $, \( \widehat{f}, \widehat{f^\prime} \in L^1(\mathbb{R}) \) and $ f^\prime( X^{1,x}_t) $, $ f(X^x_t), f(X^{1,x}_t) \in L^1(\Omega) $, it holds that
    \[
        \partial_x E[ f( X^x_t ) ] = e^{ \beta t } E[ f^\prime( X^{1,x}_t ) ].
    \]
\end{corollary}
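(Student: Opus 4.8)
The plan is to obtain this identity with no new analysis, purely by combining Theorems \ref{delta} and \ref{IBP}; the only real work is bookkeeping of the parameter shifts and checking that the hypotheses of both theorems are simultaneously satisfied under the assumptions of the corollary.

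First I would apply Theorem \ref{IBP} \emph{not} to $X^x$ but to the process $X^{1,x}$, whose parameter triple is $(\alpha,\beta,b+\alpha/2)$. For this application the role of ``$b$'' is played by $b+\alpha/2$, so the requirement ``$(\alpha,\beta,b-\alpha/2)\in D$'' of Theorem \ref{IBP} becomes exactly ``$(\alpha,\beta,b)\in D$'', which is the standing hypothesis; and the ``$X^{-1,x}$'' appearing there is the affine diffusion with parameter $((b+\alpha/2)-\alpha/2)$, that is $(\alpha,\beta,b)$, which is $X^x$ itself. The function-space conditions needed by Theorem \ref{IBP} --- $f\in C^1(\mathbb{R})$, $f,f',\widehat{f},\widehat{f'}\in L^1(\mathbb{R})$, and $f'(X^{1,x}_t),f(X^{1,x}_t),f(X^x_t)\in L^1(\Omega)$ --- are precisely those assumed in the corollary. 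Hence Theorem \ref{IBP} gives
\[
    E[f'(X^{1,x}_t)] = C_2\bigl(E[f(X^{1,x}_t)] - E[f(X^x_t)]\bigr),
\]
where $C_2 = 2/(\alpha t)$ if $\beta=0$ and $C_2 = 2\beta/(\alpha(e^{\beta t}-1))$ if $\beta\neq 0$.

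Next I would apply Theorem \ref{delta} directly to $X^x$ (its hypotheses $\widehat{f}\in L^1(\mathbb{R})$ and $f(X^x_t),f(X^{1,x}_t)\in L^1(\Omega)$ are implied by the assumptions of the corollary), obtaining
\[
    \partial_x E[f(X^x_t)] = C_1\bigl(E[f(X^{1,x}_t)] - E[f(X^x_t)]\bigr),
\]
with $C_1 = 2/(\alpha t)$ if $\beta=0$ and $C_1 = 2\beta e^{\beta t}/(\alpha(e^{\beta t}-1))$ if $\beta\neq 0$. Since the bracketed quantity $E[f(X^{1,x}_t)] - E[f(X^x_t)]$ is the same in both displays, I can eliminate it and write $\partial_x E[f(X^x_t)] = (C_1/C_2)\,E[f'(X^{1,x}_t)]$. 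A one-line computation shows $C_1/C_2 = e^{\beta t}$ when $\beta\neq 0$ and $C_1/C_2 = 1 = e^{\beta t}$ when $\beta=0$, which is the claimed formula (and confirms the limiting consistency at $\beta=0$).

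The argument has essentially no obstacle beyond the care required to invoke the two theorems on the correct processes with compatible integrability conditions. The one point worth flagging is that applying Theorem \ref{IBP} to $X^{1,x}$ is legitimate precisely because the backward shift $b+\alpha/2\leadsto b$ lands back in $D$, so that no extra parameter restriction (such as $b-\alpha/2\ge 0$ for $X^x$) is needed here. If one prefers to avoid even the harmless division by $C_2$, one may instead note $C_1=e^{\beta t}C_2$ from the outset and substitute the first display into the second.
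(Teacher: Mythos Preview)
Your proposal is correct and is exactly the argument the paper intends: the corollary is stated immediately after Theorems \ref{delta} and \ref{IBP} with the single sentence ``combining these two theorems,'' and your proof spells out precisely that combination, including the key observation that applying Theorem \ref{IBP} to $X^{1,x}$ makes its auxiliary process $X^{-1,x}$ coincide with $X^x$ so that the parameter restriction reduces to $(\alpha,\beta,b)\in D$. Your bookkeeping of the integrability hypotheses and the computation $C_1/C_2=e^{\beta t}$ are both correct.
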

\begin{remark}
    In fact, H. Totsuka (Weak derivatives of Squared Bessel processes, unpublished master's thesis, 2022, Ritsumeikan University) derived formulas similar to Theorems \ref{delta}, \ref{IBP} and Corollary \ref{affinecombined} for the squared Bessel process instead of affine diffusions. Note that the squared Bessel process of dimension \(\delta\) is an affine diffusion with parameters \( (4,0,\delta) \). Essentially, our result is an extension of hers, though not a complete one. This is because, while we used characteristic functions and the Fourier transform, she based her proof on the density functions. Instead of imposing integrability on the Fourier transform of the test function, she imposed a stronger integrability condition on the test function itself.
\end{remark}

\subsection{Other derivatives}
\label{sec:3.2}
We believe that derivatives with respect to other parameters can be computed with similar ideas, although each example requires particular manipulations. Just as an example, we give here the formulas for the differentiation with respect to $\beta$. The result is as follows.

\begin{proposition}
    Let $ t \in \mathbb{R}_{>0} $ and \( (\alpha,\beta,b) \in D \). Moreover, let $ X^{i,x} $ be affine diffusions with parameters $ (\alpha,\beta, b + i \alpha / 2) $ for \( i=0,1,2 \), and all of them have initial value $ x \in \mathbb{R}_{>0} $. Then, for $ f \in L^1(\mathbb{R}) $ such that \( \widehat{f}\in L^1(\mathbb{R}) \) and $ f(X^{i,x}_t) \in L^1(\Omega)$, \( i=0,1,2 \), it holds for \( \beta \neq 0 \) 
    \begin{align}
        \partial_\beta E[ f( X^{0,x}_t ) ] = & \frac{ 2 x e^{ \beta t } ( \beta t - ( e^{ \beta t } - 1 ) ) }{ \alpha ( e^{ \beta t } - 1 )^2 } E[ f( X^{2,x}_t ) ] \notag  \\
        &+ \frac{ 2 }{ \alpha ( e^{ \beta t } - 1 ) } \biggl( {b} \frac{ \beta t e^{ \beta t }  - ( e^{ \beta t } - 1 )  }{ \beta } + \beta t x e^{ \beta t } \biggr) E[ f( X^{1,x}_t ) ] \notag \\
        &- \frac{ 2 ( \beta t e^{ \beta t }  - ( e^{ \beta t } - 1 ) ) }{ \alpha ( e^{ \beta t } - 1 ) } \biggl( \frac{ b }{ \beta } + \frac{ x e^{ \beta t }  }{ e^{ \beta t } - 1 } \biggr) E[ f( X^{0,x}_t ) ] .\label{beta_E}
    \end{align}
    Otherwise, in the case that $\beta=0$, we have
    \begin{align*}
        \Bigl. \partial_\beta E[ f( X^{0,x}_t ) ] \Bigr|_{\beta=0} = - \frac{x}{\alpha} E[ f( X^{2,x}_t ) ] + \frac{2}{\alpha} \biggl( \frac{bt}{2} - x \biggr) E[ f( X^{1,x}_t ) ] + \frac{1}{\alpha} ( bt - x ) E[ f( X^{0,x}_t ) ].
    \end{align*}
\end{proposition}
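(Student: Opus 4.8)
The plan is to follow the template of the proofs of Theorems~\ref{delta} and \ref{IBP}: represent $E[f(X^{0,x}_t)]$ through the Fourier transform, differentiate under the integral in $\beta$, compute $\partial_\beta$ of the characteristic function, and identify the outcome as a linear combination, with coefficients not depending on $\theta$, of the characteristic functions of $X^{0,x}_t$, $X^{1,x}_t$ and $X^{2,x}_t$. First, exactly as in \eqref{characteristic_expression}, the hypotheses $\widehat{f}\in L^1(\mathbb{R})$ and $f(X^{i,x}_t)\in L^1(\Omega)$ yield, by Fubini, $E[f(X^{i,x}_t)]=\frac{1}{2\pi}\int_{\mathbb{R}}\widehat{f}(\theta)\varphi^{i,x}_t(\theta)\,d\theta$ for $i=0,1,2$, where $\varphi^{i,x}_t$ denotes the characteristic function of $X^{i,x}_t$. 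Set $A(\theta):=\frac{2\beta}{2\beta-(e^{\beta t}-1)\alpha i\theta}$ when $\beta\neq0$ and $A(\theta):=\frac{2}{2-\alpha i\theta t}$ when $\beta=0$. The explicit formula for the characteristic function shows $\varphi^{0,x}_t(\theta)=A(\theta)^{2b/\alpha}\exp(x\,h(t,\theta))$, and since passing from $X^{0,x}$ to $X^{i,x}$ only replaces $b$ by $b+i\alpha/2$, hence the exponent $2b/\alpha$ by $2b/\alpha+i$, while leaving $h(t,\theta)$ unchanged, one gets the crucial identity $\varphi^{i,x}_t(\theta)=A(\theta)^{i}\varphi^{0,x}_t(\theta)$ for $i=0,1,2$.

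Next I would justify differentiation under the integral. One checks $|A(\theta)|\le 1$ for every real $\beta$ and $\theta$, because $|A(\theta)|^2=4\beta^2/(4\beta^2+(e^{\beta t}-1)^2\alpha^2\theta^2)\le 1$; together with $|\varphi^{0,x}_t(\theta)|\le 1$ this controls $|\partial_\beta\varphi^{0,x}_t(\theta)|=|\varphi^{0,x}_t(\theta)|\,|\partial_\beta\log\varphi^{0,x}_t(\theta)|$ by a constant that is free of $\theta$ and locally bounded in $\beta$ (with $\beta$ staying away from $0$ in the case $\beta\neq0$), since the computation below exhibits $\partial_\beta\log\varphi^{0,x}_t(\theta)$ as a quadratic polynomial in $A(\theta)$ with coefficients free of $\theta$. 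Thus $|\widehat{f}(\theta)\partial_\beta\varphi^{0,x}_t(\theta)|\le C|\widehat{f}(\theta)|\in L^1(\mathbb{R})$, and the mean value theorem together with dominated convergence give $\partial_\beta E[f(X^{0,x}_t)]=\frac{1}{2\pi}\int_{\mathbb{R}}\widehat{f}(\theta)\partial_\beta\varphi^{0,x}_t(\theta)\,d\theta$. For the computation itself, write $\varphi^{0,x}_t=A^{2b/\alpha}\exp(xh)$ with $h(t,\theta)=e^{\beta t}(i\theta)A(\theta)$, so that $\partial_\beta\varphi^{0,x}_t=\varphi^{0,x}_t\left(\frac{2b}{\alpha}\frac{\partial_\beta A}{A}+x\,\partial_\beta h\right)$ with $\partial_\beta A=\frac{2\alpha(i\theta)(\beta t e^{\beta t}-(e^{\beta t}-1))}{(2\beta-(e^{\beta t}-1)\alpha i\theta)^2}$ and a similar closed form for $\partial_\beta h$. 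Every remaining factor of $\theta$ is then eliminated by the relation already used for Theorem~\ref{delta}, namely $i\theta A=\frac{2\beta}{\alpha(e^{\beta t}-1)}(A-1)$ (whence $(i\theta)^2A^2=\left(\frac{2\beta}{\alpha(e^{\beta t}-1)}\right)^2(A-1)^2$, $i\theta A^2=\frac{2\beta}{\alpha(e^{\beta t}-1)}(A-1)A$, and so on). This makes $\partial_\beta\log\varphi^{0,x}_t$ a quadratic in $A$ with coefficients in $t,x,\alpha,\beta,b$ only; multiplying by $\varphi^{0,x}_t$ and substituting $A^i\varphi^{0,x}_t=\varphi^{i,x}_t$ gives $\partial_\beta\varphi^{0,x}_t(\theta)=c_2\varphi^{2,x}_t(\theta)+c_1\varphi^{1,x}_t(\theta)+c_0\varphi^{0,x}_t(\theta)$ with $c_0,c_1,c_2$ exactly the coefficients of \eqref{beta_E}, and feeding this back into the integral yields \eqref{beta_E}.

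For $\beta=0$ I would either repeat the above with $A(\theta)=\frac{2}{2-\alpha i\theta t}$ and the identity $i\theta A=\frac{2}{\alpha t}(A-1)$, or, more economically, pass to the limit $\beta\to 0$ in \eqref{beta_E}: by the bounds above $\theta\mapsto\partial_\beta\varphi^{0,x}_t(\theta)$ is continuous in $\beta$ at $0$ uniformly in $\theta$, so dominated convergence applies, and the apparently singular coefficients (for instance the factor $\frac{\beta t e^{\beta t}-(e^{\beta t}-1)}{\beta(e^{\beta t}-1)}$, which tends to $t/2$) have finite limits giving precisely the stated expression. I expect the main obstacle to be the algebra of the differentiation step: $\beta$ enters $\varphi^{0,x}_t$ through the exponent of $A$, through $h$, and through the $e^{\beta t}$ sitting inside $A$ itself, so $\partial_\beta\varphi^{0,x}_t$ is genuinely quadratic in $A$ — which is exactly why the second shifted process $X^{2,x}$ is unavoidable — and collapsing it into a clean combination of $\varphi^{0,x}_t,\varphi^{1,x}_t,\varphi^{2,x}_t$ with the exact coefficients of \eqref{beta_E}, and likewise resolving the $0/0$ limits when $\beta=0$, is where the care is needed; the analytic inputs (Fourier representation, $|A|\le 1$, dominated convergence) are the same as for Theorems~\ref{delta} and \ref{IBP}.
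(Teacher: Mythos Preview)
Your proposal is correct and follows essentially the same route as the paper's own proof: the same Fourier representation \eqref{characteristic_expression}, differentiation of the characteristic function in $\beta$, elimination of $\theta$ via the identity $i\theta A=\frac{2\beta}{\alpha(e^{\beta t}-1)}(A-1)$ (equivalently the paper's $i\theta=\frac{2\beta}{\alpha(e^{\beta t}-1)}(1-\psi(\beta)^{-1})$ with $\psi=A$), recognition of the resulting expression as $c_2\varphi^{2,x}_t+c_1\varphi^{1,x}_t+c_0\varphi^{0,x}_t$, and a limit argument for $\beta=0$. Your justification of the interchange of $\partial_\beta$ and $\int$ via $|A|\le1$ and local boundedness of the $c_i$ is in fact slightly more explicit than the paper's, which simply invokes continuity of the coefficients on $\mathbb{R}\setminus\{0\}$ together with $|\varphi|\le1$.
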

\begin{proof}
    Since the argument is essentially the same as that in Theorem \ref{delta}, we present only the calculation for this proposition assuming that all appearing quantities are integrable. First, assume \( \beta \neq 0 \) and let
    \[
        \psi(\beta) := \frac{ 2 \beta }{ 2 \beta - ( e^{ \beta t } - 1 ) \alpha i \theta }.
    \]
    
    Then recall that the characteristic function of \( X^{0,x}_t \) is as follows:
    \[
        \varphi^{\alpha,\beta,b}_{x,t}(\theta) := \psi(\beta)^{2b/\alpha} \exp( \psi(\beta) e^{ \beta t } i \theta x ).
    \]
    Therefore, we get
    \begin{align}
        &\partial_\beta \varphi^{\alpha,\beta,b}_{x,t}(\theta) \notag \\
        &= \frac{2b}{\alpha} \psi(\beta)^{2b/\alpha-1} \psi^\prime(\beta) \exp( \psi(\beta) e^{ \beta t } i \theta x ) +  \psi(\beta)^{2b/\alpha} \exp( \psi(\beta) e^{ \beta t } i \theta x ) i \theta x ( \psi^\prime(\beta) e^{ \beta t } + t \psi(\beta) e^{ \beta t } ) \notag \\
        &= \biggl( \frac{2b}{\alpha} \psi(\beta)^{-1} \psi^\prime(\beta) +  \psi^\prime(\beta) i \theta x e^{ \beta t } + t \psi(\beta) i \theta x e^{ \beta t } \biggr) \psi(\beta)^{2b/\alpha}\exp( \psi(\beta) e^{ \beta t } i \theta x ). \label{beta}
    \end{align}
    
    By direct differentiation, we have 
    \[
        \psi^\prime(\beta) = \frac{ \beta t e^{ \beta t } \alpha - ( e^{ \beta t } - 1 ) \alpha }{ 2 \beta^2 } \psi(\beta)^2 i \theta.
    \]
    Using the decomposition 
    \[
        i \theta = \frac{ 2 \beta }{ \alpha ( e^{ \beta t } - 1 ) } - \frac{ 2 \beta }{ \alpha ( e^{ \beta t } - 1 ) \psi(\beta) },
    \]
    (\ref{beta}) can be rewritten as follows:
    \begin{align*}
        \partial_\beta \varphi^{\alpha,\beta,b}_{x,t}(\theta) &= \frac{ 2 x e^{ \beta t } }{ \alpha ( e^{ \beta t } - 1 ) } \biggl( \frac{ \beta t e^{ \beta t }  - ( e^{ \beta t } - 1 )  }{ e^{ \beta t } - 1 } - \beta t \biggr) \psi(\beta)^{2b/\alpha+2} \exp( \psi(\beta) e^{ \beta t } i \theta x )   \\
        &+ \frac{ 2 }{ \alpha ( e^{ \beta t } - 1 ) } \biggl( {b} \frac{ \beta t e^{ \beta t }  - ( e^{ \beta t } - 1 )  }{ \beta } + \beta t x e^{ \beta t } \biggr) \psi(\beta)^{2b/\alpha+1} \exp( \psi(\beta) e^{ \beta t } i \theta x )   \\
        &- \frac{ 2 ( \beta t e^{ \beta t }  - ( e^{ \beta t } - 1 ) ) }{ \alpha ( e^{ \beta t } - 1 )^2 } \biggl( \frac{ b }{ \beta } + { x e^{ \beta t }  } \biggr) \psi(\beta)^{2b/\alpha} \exp( \psi(\beta) e^{ \beta t } i \theta x ).
    \end{align*}
    That is,
    \begin{align*}
        \partial_\beta \varphi^{\alpha,\beta,b}_{x,t}(\theta)     &= \frac{ 2 x e^{ \beta t } }{ \alpha ( e^{ \beta t } - 1 ) } \biggl( \frac{ \beta t e^{ \beta t }  - ( e^{ \beta t } - 1 )  }{ e^{ \beta t } - 1 } - \beta t \biggr) \varphi^{\alpha,\beta,b+\alpha}_{x,t}(\theta)   \\
        &+ \frac{ 2 }{ \alpha ( e^{ \beta t } - 1 ) } \biggl( {b} \frac{ \beta t e^{ \beta t }  - ( e^{ \beta t } - 1 )  }{ \beta } + \beta t x e^{ \beta t } \biggr) \varphi^{\alpha,\beta,b+\alpha/2}_{x,t}(\theta)   \\
        &- \frac{ 2 ( \beta t e^{ \beta t }  - ( e^{ \beta t } - 1 ) ) }{ \alpha ( e^{ \beta t } - 1 ) } \biggl( \frac{ b }{ \beta } + \frac{ x e^{ \beta t }  }{ e^{ \beta t } - 1 } \biggr) \varphi^{\alpha,\beta,b}_{x,t}(\theta).
    \end{align*}
    In this case, the coefficients of three characteristic functions are all continuous with respect to \(\beta\) on \( \mathbb{R} \setminus \{ 0 \} \), and are therefore bounded on any finite interval in \( \mathbb{R} \setminus \{ 0 \} \). Thus, using the mean value theorem and the dominated convergence theorem, together with the fact that the absolute value of a characteristic function is equal to \(1\), we obtain the conclusion in the case \( \beta \neq 0 \).
    
    Moreover, \( \beta \mapsto E[ f( X^{0,x}_t ) ] \) is continuous at \(0\) and the right hand side of (\ref{beta_E}) has a finite limit as \( \beta \to 0 \). Therefore, the result for $\beta=0$ follows.
\end{proof}
\section{An extension to particularly simple multidimensional case}
\label{sec:4}
Consider the following \( (\mathbb{R}_{\ge0})^d \)-valued SDE:
\begin{equation}\label{MultidimAaffineSDE}
    dX^x_t = \alpha(X^x_t) dW_t + ( \beta X^x_t + b )dt, \qquad X^x_0 = x,
\end{equation}
where $ W $ is a $d$-dimensional standard Brownian motion and $ \alpha \colon \mathbb{R}^d \to \mathbb{R}^d \times \mathbb{R}^d $ is defined by
\begin{equation*}
    \alpha((x_k)_{k=1,\dots,d})_{i,j} = \delta_{i,j} \sqrt{ \alpha_i x_i }, \qquad 1 \le i,j \le d
\end{equation*}
with $ \alpha_i > 0$, $ i = 1, \dots, d $, $ \beta = \mathrm{diag}(\beta_1, \dots, \beta_d) $ is a diagonal real matrix of order $d$, and $ b = (b_i)_{i=1,\dots,d} \in (\mathbb{R}_{\ge0})^d $. Also, we assume the initial value $ x = (x_i)_{i=1,\dots,d} $ satisfies $ x_i \ge 0 $, $ i = 1, \dots, d $. In this case, all components of $ ( (X^x)_1, \dots, (X^x)_d ) $ are independent, so $ X^x $ can essentially be regarded as a combination of one-dimensional affine diffusions. 

In fact, the solution \( (X^x)_{x} \) can be considered as a particularly simple \(d\)-dimensional affine diffusion. For general multidimensional affine diffusion, see \cite{DFS2003} for instance. As a consequence of the theorems stated in the previous section, we obtain the following results. In the statement we use $ e_i $ for the $i$-th vector of the standard basis of $\mathbb{R}^d$. Also, \( \widehat{f} \) denotes the usual Fourier transform on \( \mathbb{R}^d \) in this section.

\begin{theorem}\label{multidimaffinedelta}
    Let $ k $ be in $ \{ 1, \dots, d \} $, $ t \in \mathbb{R}_{>0} $ and \( ( \alpha_i,\beta_i,b_i ) \in D \) for all \(i\). Moreover, let $ X^x $ and $ X^{e_k,x} $ be the solutions of SDE (\ref{MultidimAaffineSDE}) whose parameters are $ ( (\alpha_i)_{i=1,\dots,d},\beta,b) $ and $ ( (\alpha_i)_{i=1,\dots,d},\beta, b + ( \alpha_k / 2 ) e_k ) $ respectively, and their initial values are $ x \in (0,\infty)^d $. Then, for $ f \in L^1(\mathbb{R}^d) $ such that \( \widehat{f} \in L^1( \mathbb{R}^d ) \) and $ f(X^x_t), f(X^{e_k,x}_t) \in L^1( \Omega )$, it holds that
    \[
        \partial_{x_k} E[ f( X^x_t ) ] =
        \begin{dcases}
            \frac{2}{ \alpha_k t } \bigl( E[ f( X^{e_k,x}_t) ] - E[ f(  X^x_t  ) ] \bigr);    & \text{ if }   \beta_{k,k}=0,    \\
            \frac{ 2 \beta_{k,k} e^{ \beta_{k,k} t } }{ \alpha_k ( e^{ \beta_{k,k} t } -1 ) } \bigl( E[ f( X^{e_k,x}_t) ] - E[ f( X^x_t ) ] \bigr);  & \text{ if } \beta_{k,k} \neq 0.
        \end{dcases}
    \]
\end{theorem}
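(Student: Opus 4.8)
The plan is to reduce the statement to the one-dimensional Theorem~\ref{delta} by exploiting the independence of the coordinates of $X^x$. Since $\alpha(\cdot)$ is diagonal and $\beta$ is diagonal, the SDE (\ref{MultidimAaffineSDE}) decouples into $d$ independent scalar equations of the form (\ref{1dimAffineSDE}), the $i$-th one having parameters $(\alpha_i,\beta_{i,i},b_i)$ and initial value $x_i$. Consequently the law $\nu^x_t$ of $X^x_t$ is the product measure $\bigotimes_{i=1}^d\nu^{x_i}_{t,i}$, where $\nu^{x_i}_{t,i}$ is the law of the $i$-th scalar affine diffusion, and the characteristic function factorizes: for $\theta=(\theta_1,\dots,\theta_d)\in\mathbb{R}^d$,
\[
\varphi^x_t(\theta)=\prod_{i=1}^d\varphi^{x_i}_{t,i}(\theta_i),
\]
with $\varphi^{x_i}_{t,i}$ the scalar characteristic functions computed in Section~\ref{sec:3}. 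The process $X^{e_k,x}$ differs from $X^x$ only in the $k$-th coordinate, whose parameter becomes $(\alpha_k,\beta_{k,k},b_k+\alpha_k/2)$; hence $\varphi^{e_k,x}_t(\theta)=\varphi^{1,x_k}_{t,k}(\theta_k)\prod_{i\neq k}\varphi^{x_i}_{t,i}(\theta_i)$, where $\varphi^{1,x_k}_{t,k}$ is the characteristic function of the shifted scalar diffusion appearing in Theorem~\ref{delta}.

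Next I would carry out the Fourier inversion step of Theorem~\ref{delta} in dimension $d$. Using $\widehat{f}\in L^1(\mathbb{R}^d)$ and Fubini's theorem exactly as in (\ref{characteristic_expression}), the integrand $(\theta,y)\mapsto\widehat{f}(\theta)e^{i\theta\cdot y}$ being $(\mathrm{Leb}\times\nu^x_t)$-integrable by assumption, one gets
\[
E[f(X^x_t)]=\frac{1}{(2\pi)^d}\int_{\mathbb{R}^d}\widehat{f}(\theta)\,\varphi^x_t(\theta)\,d\theta,
\]
and likewise for $X^{e_k,x}_t$. To differentiate under the integral sign with respect to $x_k$, observe that in the product $\varphi^x_t(\theta)=\prod_i\varphi^{x_i}_{t,i}(\theta_i)$ only the $k$-th factor depends on $x_k$, so
\[
\partial_{x_k}\varphi^x_t(\theta)=\Bigl(\partial_{x_k}\varphi^{x_k}_{t,k}(\theta_k)\Bigr)\prod_{i\neq k}\varphi^{x_i}_{t,i}(\theta_i).
\]
Each factor with $i\neq k$ has modulus at most $1$, and $|\partial_{x_k}\varphi^{x_k}_{t,k}(\theta_k)|\le1$ is precisely the estimate obtained in the proof of Theorem~\ref{delta} with $(\alpha,\theta)$ replaced by $(\alpha_k,\theta_k)$. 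Therefore $|\widehat{f}(\theta)\,\partial_{x_k}\varphi^x_t(\theta)|\le|\widehat{f}(\theta)|$, which is integrable and independent of $x_k$, and the mean value theorem together with dominated convergence give $\partial_{x_k}E[f(X^x_t)]=(2\pi)^{-d}\int_{\mathbb{R}^d}\widehat{f}(\theta)\,\partial_{x_k}\varphi^x_t(\theta)\,d\theta$.

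Finally I would insert the scalar algebraic identity from the proof of Theorem~\ref{delta}, namely $\partial_{x_k}\varphi^{x_k}_{t,k}(\theta_k)=C_k\bigl(\varphi^{1,x_k}_{t,k}(\theta_k)-\varphi^{x_k}_{t,k}(\theta_k)\bigr)$, where $C_k$ is the one-dimensional constant appearing in Theorem~\ref{delta} with $(\alpha,\beta)$ replaced by $(\alpha_k,\beta_{k,k})$. Multiplying this by $\prod_{i\neq k}\varphi^{x_i}_{t,i}(\theta_i)$ and identifying the two resulting products with $\varphi^{e_k,x}_t(\theta)$ and $\varphi^x_t(\theta)$, one obtains $\partial_{x_k}\varphi^x_t(\theta)=C_k\bigl(\varphi^{e_k,x}_t(\theta)-\varphi^x_t(\theta)\bigr)$; substituting into the integral and reading the inversion formula backwards yields the claimed identity.

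There is no essential new difficulty here beyond Theorem~\ref{delta}: the only points requiring a little care are the multivariate applications of Fubini and dominated convergence, and the elementary but crucial observation that the $d-1$ characteristic-function factors left untouched by $\partial_{x_k}$ are harmless because each has modulus at most one, so that the one-dimensional derivative bound still dominates the entire integrand. The bookkeeping needed to recognize the product of scalar characteristic functions as $\varphi^{e_k,x}_t$ is immediate from the independence of the coordinates.
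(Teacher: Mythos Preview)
Your proposal is correct and follows essentially the same route as the paper's own proof: Fourier inversion to write $E[f(X^x_t)]$ as an integral against the multidimensional characteristic function, factorization of the characteristic function via independence of the coordinates, and reduction of the $x_k$-derivative to the one-dimensional computation of Theorem~\ref{delta}. The paper's proof is in fact sketchier than yours---it writes down the inversion formula, invokes independence and the equality in law of the non-$k$ coordinates of $X^x$ and $X^{e_k,x}$, and declares the rest ``straightforward''---so your explicit product decomposition, dominated-convergence bound $|\partial_{x_k}\varphi^x_t(\theta)|\le 1$, and identification of $\varphi^{e_k,x}_t$ are all in the intended spirit.

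One small remark: the constant $C_k$ you inherit from Theorem~\ref{delta} for $\beta_{k,k}\neq 0$ is $\dfrac{2\beta_{k,k}e^{\beta_{k,k}t}}{\alpha_k(e^{\beta_{k,k}t}-1)}$, whereas the displayed formula in the statement of Theorem~\ref{multidimaffinedelta} omits the factor $e^{\beta_{k,k}t}$. Your argument actually produces the former, which is the correct analogue of the one-dimensional result; the missing exponential in the stated constant appears to be a typographical slip in the paper (compare also the exponents in Theorems~\ref{IBP} and~\ref{multidimaffineIBP}, where the situation is reversed).
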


The assumptions on \(f\) in Theorem \ref{multidimaffineIBP} and Corollary \ref{multidimaffinecombined} are as follows.

\begin{enumerate}
    \item $ f, \widehat{f} \in L^1( \mathbb{R}^d ) $,
    \item $ f(y_1,\dots,y_{k-1},\cdot,y_{k+1},\dots,y_d) \in C^1(\mathbb{R}) $ for each $ y_1,\dots,y_{k-1},y_{k+1},\dots,y_d \in \mathbb{R} $,
    \item $ \partial_{y_k} f \in L^1( \mathbb{R}^d ) $,
    \item $ \partial_{y_k} f ( X^x_t ), f ( X^x_t ), f ( X^{\ast,x}_t ) \in L^1( \Omega ) $.
\end{enumerate}
Here, \( X^{\ast,x}_t \) refers to \( X^{-e_k,x}_t \) in Theorem~\ref{multidimaffineIBP}, and to \( X^{e_k,x}_t \) in Corollary~\ref{multidimaffinecombined}.

\begin{theorem}\label{multidimaffineIBP}
    Let $ k $ be in $ \{ 1, \dots, d \} $, $ t \in \mathbb{R}_{>0} $, \( ( \alpha_i,\beta_i,b_i ) \in D \) for all \(i\) and \( ( \alpha_k,\beta_k,b_k-\alpha_k/2 ) \in D \). Moreover, let $ X^x $ and $ X^{-e_k,x} $ be the solutions of SDE (\ref{MultidimAaffineSDE}) whose parameters are $ ( (\alpha_i)_{i=1,\dots,d},\beta,b) $ and $ ( (\alpha_i)_{i=1,\dots,d},\beta, b - ( \alpha_k / 2 ) e_k ) $ respectively, and their initial values are $ x \in (0,\infty)^d $. Furthermore, assume the above assumptions on \(f\). Then, it holds that
    \[
        E[ (\partial_{y_k}f)( X^x_t ) ] =
        \begin{dcases}
            \frac{2}{ \alpha_k t } \bigl( E[ f(X^x_t) ] - E[ f( X^{-e_k,x}_t ) ] \bigr);    &  \text{ if }  \beta_{k,k}=0,    \\
            \frac{ 2\beta_{k,k} }{ \alpha_k ( e^{ \beta_{k,k} t } -1 ) } \bigl( E[ f(X^x_t) ] - E[ f( X^{-e_k,x}_t ) ] \bigr);  & \text{ if } \beta_{k,k} \neq 0.
        \end{dcases}
    \]
\end{theorem}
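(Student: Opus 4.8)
The plan is to deduce Theorem~\ref{multidimaffineIBP} from the one-dimensional Theorem~\ref{IBP}, exploiting the product structure: since $\alpha$ and $\beta$ are diagonal, the coordinates $(X^x)_1,\dots,(X^x)_d$ are independent one-dimensional affine diffusions, the $j$-th one having parameters $(\alpha_j,\beta_{j,j},b_j)$ and started at $x_j$. Writing $\varphi^{(j)}_t(\,\cdot\,;y)$ for the characteristic function of a one-dimensional affine diffusion with parameters $(\alpha_j,\beta_{j,j},b_j)$ started at $y$, independence yields the factorizations
\[
    E\bigl[e^{i\langle\theta,X^x_t\rangle}\bigr]=\prod_{j=1}^d\varphi^{(j)}_t(\theta_j;x_j),\qquad
    E\bigl[e^{i\langle\theta,X^{-e_k,x}_t\rangle}\bigr]=\varphi^{(k),-}_t(\theta_k;x_k)\prod_{j\neq k}\varphi^{(j)}_t(\theta_j;x_j),
\]
for $\theta=(\theta_1,\dots,\theta_d)\in\mathbb{R}^d$, where $\varphi^{(k),-}_t$ is the one-dimensional characteristic function obtained from $\varphi^{(k)}_t$ by the parameter shift $b_k\leadsto b_k-\alpha_k/2$; this process is well defined precisely because $(\alpha_k,\beta_{k,k},b_k-\alpha_k/2)\in D$.

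I would then repeat, now in $\mathbb{R}^d$, the opening of the proof of Theorem~\ref{IBP}. Fourier inversion on $\mathbb{R}^d$ together with Fubini's theorem — legitimate because $(\theta,y)\mapsto\widehat{\partial_{y_k}f}(\theta)e^{i\langle\theta,y\rangle}$ is integrable against the product of Lebesgue measure and the law of $X^x_t$ by the stated hypotheses, and because characteristic functions are bounded by $1$ — gives
\[
    E\bigl[(\partial_{y_k}f)(X^x_t)\bigr]=\frac{1}{(2\pi)^d}\int_{\mathbb{R}^d}\widehat{\partial_{y_k}f}(\theta)\prod_{j=1}^d\varphi^{(j)}_t(\theta_j;x_j)\,d\theta=\frac{1}{(2\pi)^d}\int_{\mathbb{R}^d}i\theta_k\,\widehat{f}(\theta)\prod_{j=1}^d\varphi^{(j)}_t(\theta_j;x_j)\,d\theta,
\]
using $\widehat{\partial_{y_k}f}(\theta)=i\theta_k\widehat{f}(\theta)$, which lies in $L^1(\mathbb{R}^d)$ by the hypothesis $\partial_{y_k}f\in L^1(\mathbb{R}^d)$. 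Applying to the single factor $\varphi^{(k)}_t(\theta_k;x_k)$ the algebraic identity used in the proof of Theorem~\ref{IBP} — namely $i\theta_k=\tfrac{2}{\alpha_k t}\bigl(1-\tfrac{2-\alpha_k i\theta_k t}{2}\bigr)$ when $\beta_{k,k}=0$, and the corresponding identity involving $e^{\beta_{k,k}t}$ when $\beta_{k,k}\neq0$ — produces $i\theta_k\,\varphi^{(k)}_t(\theta_k;x_k)=C_k\bigl(\varphi^{(k)}_t(\theta_k;x_k)-\varphi^{(k),-}_t(\theta_k;x_k)\bigr)$ with $C_k$ the constant in the statement. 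Multiplying back by $\prod_{j\neq k}\varphi^{(j)}_t(\theta_j;x_j)$, recognizing the two resulting products as the characteristic functions of $X^x_t$ and $X^{-e_k,x}_t$ via the factorizations above, and inverting the Fourier transform yields $E[(\partial_{y_k}f)(X^x_t)]=C_k\bigl(E[f(X^x_t)]-E[f(X^{-e_k,x}_t)]\bigr)$.

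The manipulation is essentially one-dimensional: the coordinates $j\neq k$ never intervene, they merely ride along as fixed factors. The only points to check with care are therefore the $d$-dimensional Fubini and dominated-convergence conditions — the crucial one being that multiplication by the unbounded factor $\theta_k$ does no harm because $i\theta_k\widehat{f}=\widehat{\partial_{y_k}f}$ is itself integrable — and the bookkeeping that identifies $\varphi^{(k),-}_t(\theta_k;x_k)\prod_{j\neq k}\varphi^{(j)}_t(\theta_j;x_j)$ with the characteristic function of $X^{-e_k,x}_t$, i.e.\ that shifting the parameter vector by $-(\alpha_k/2)e_k$ alters only the $k$-th factor; I expect no genuine obstacle beyond this routine verification. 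As an alternative route avoiding Fourier analysis, one may instead condition on $\{(X^x_t)_j:j\neq k\}$, apply the one-dimensional Theorem~\ref{IBP} to the section $g(\cdot)=f(y_1,\dots,y_{k-1},\cdot,y_{k+1},\dots,y_d)$, and take expectations: by independence the conditional law of the $k$-th coordinate equals its unconditional law — and likewise for $X^{-e_k,x}_t$ — so the one-dimensional identity integrates directly to the $d$-dimensional one.
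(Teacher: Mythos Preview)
Your proposal is correct and follows exactly the route the paper indicates: the paper says only that the proof of Theorem~\ref{multidimaffineIBP} ``follows the same lines'' as that of Theorem~\ref{multidimaffinedelta}, namely the $d$-dimensional Fourier representation combined with the product form of the characteristic function coming from independence of coordinates, which is precisely what you carry out. One minor caution worth flagging: the algebraic identity borrowed from Theorem~\ref{IBP} actually produces $C_k=\tfrac{2\beta_{k,k}}{\alpha_k(e^{\beta_{k,k}t}-1)}$ when $\beta_{k,k}\neq0$ (no factor $e^{\beta_{k,k}t}$ in the numerator), in agreement with the one-dimensional result; the extra exponential in the displayed statement appears to be a misprint --- compare the constants in Theorems~\ref{delta}/\ref{multidimaffinedelta} with those in Theorems~\ref{IBP}/\ref{multidimaffineIBP}, which are swapped --- so your phrase ``with $C_k$ the constant in the statement'' should be read accordingly.
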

\begin{corollary}\label{multidimaffinecombined}
    Let $ k $ be in $ \{ 1, \dots, d \} $, $ t \in \mathbb{R}_{>0} $ and \( ( \alpha_i,\beta_i,b_i ) \in D \) for all \(i\). Moreover, let $ X^x $ and $ X^{e_k,x} $ be the solutions of SDE (\ref{MultidimAaffineSDE}) whose parameters are $ ( (\alpha_i)_{i=1,\dots,d},\beta,b) $ and $ ( (\alpha_i)_{i=1,\dots,d},\beta, b + ( \alpha_k / 2 ) e_k ) $ respectively, and their initial values are $ x \in (0,\infty)^d $. Furthermore, assume the above assumptions on \(f\). Then, it holds that
    \[
        \partial_{x_k} E[ f( X^x_t ) ] = e^{ \beta_{k,k} t }E[ (\partial_{y_k}f)( X^{e_k,x}_t ) ]
    \]
\end{corollary}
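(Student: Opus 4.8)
The plan is to reduce everything to the one-dimensional situation already handled by Corollary \ref{affinecombined}, exploiting the independence of the components $(X^x)_1,\dots,(X^x)_d$. First I would observe that, by construction of the SDE (\ref{MultidimAaffineSDE}), the coordinate processes are mutually independent one-dimensional affine diffusions, the $i$-th having parameter $(\alpha_i,\beta_{i,i},b_i)$ and initial value $x_i$; the auxiliary process $X^{e_k,x}$ differs from $X^x$ only in its $k$-th coordinate, which has parameter $(\alpha_k,\beta_{k,k},b_k+\alpha_k/2)$. Hence $\partial_{x_k}$ only interacts with the $k$-th factor, and the identity to be proved is really the one-dimensional Corollary \ref{affinecombined} applied ``fiberwise'' in the $k$-th variable.

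The key steps, in order, would be as follows. (1) Write $E[f(X^x_t)] = \frac{1}{(2\pi)^d}\int_{\mathbb{R}^d}\widehat f(\theta)\,\varphi^x_t(\theta)\,d\theta$ via the $d$-dimensional inverse Fourier transform and Fubini, exactly as in the proof of Theorem \ref{delta}, where $\varphi^x_t(\theta)=\prod_{i=1}^d\varphi^{x_i}_{t,i}(\theta_i)$ by independence, each factor being the explicit one-dimensional characteristic function recorded in Section \ref{sec:3}. (2) Justify differentiating under the integral sign in $x_k$: since $\partial_{x_k}\varphi^x_t(\theta)=\bigl(\partial_{x_k}\varphi^{x_k}_{t,k}(\theta_k)\bigr)\prod_{i\ne k}\varphi^{x_i}_{t,i}(\theta_i)$ and the one-dimensional bound $|\partial_{x_k}\varphi^{x_k}_{t,k}(\theta_k)|\le 1$ from the proof of Theorem \ref{delta} applies, while $|\varphi^{x_i}_{t,i}|\le 1$ for $i\ne k$, the integrand is dominated by $|\widehat f(\theta)|\in L^1(\mathbb{R}^d)$, and the mean value theorem plus dominated convergence give $\partial_{x_k}E[f(X^x_t)]=\frac{1}{(2\pi)^d}\int\widehat f(\theta)\,\partial_{x_k}\varphi^x_t(\theta)\,d\theta$. (3) Apply the one-dimensional algebraic identity underlying Corollary \ref{affinecombined} to the $k$-th factor alone: it gives $\partial_{x_k}\varphi^{x_k}_{t,k}(\theta_k) = e^{\beta_{k,k}t}\, i\theta_k\,\varphi^{(k),x_k}_{t,k}(\theta_k)$, where $\varphi^{(k),x_k}_{t,k}$ is the characteristic function of the shifted parameter $b_k+\alpha_k/2$; multiplying by the unchanged factors $\prod_{i\ne k}\varphi^{x_i}_{t,i}(\theta_i)$ yields $\partial_{x_k}\varphi^x_t(\theta)=e^{\beta_{k,k}t}\,i\theta_k\,\varphi^{e_k,x}_t(\theta)$. (4) Recognize $i\theta_k\,\widehat f(\theta)$ as $\widehat{\partial_{y_k}f}(\theta)$ (using assumptions (1)–(3) on $f$), substitute, and invert the Fourier transform back, as in the proof of Theorem \ref{IBP}, to obtain $\partial_{x_k}E[f(X^x_t)]=e^{\beta_{k,k}t}\,E[(\partial_{y_k}f)(X^{e_k,x}_t)]$. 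The $\beta_{k,k}=0$ and $\beta_{k,k}\ne 0$ cases are treated together since the relevant one-dimensional identities hold in both regimes and agree in the limit.

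I do not expect a genuine obstacle here: the result is essentially a bookkeeping exercise combining the one-dimensional Theorems \ref{delta} and \ref{IBP} (equivalently Corollary \ref{affinecombined}) with the product structure of the characteristic function. The only point requiring a little care is the domination in step (2): one must keep the factor $\partial_{x_k}\varphi^{x_k}_{t,k}$ in the form that admits the bound $\le 1$ (rather than first extracting the $i\theta_k$), and note that the remaining factors are bounded by $1$ uniformly in $x$, so that the dominating function $|\widehat f|$ is genuinely $x$-independent and integrable on $\mathbb{R}^d$. Alternatively, one can invoke Theorem \ref{multidimaffinedelta} and Theorem \ref{multidimaffineIBP} directly and simply compose them, which bypasses the Fourier computation entirely; I would mention this as the quickest route and relegate the explicit Fourier argument to a remark.
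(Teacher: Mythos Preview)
Your proposal is correct, and in fact the ``alternative'' route you mention at the end---composing Theorem~\ref{multidimaffinedelta} with Theorem~\ref{multidimaffineIBP}, just as Theorems~\ref{delta} and~\ref{IBP} combine to give Corollary~\ref{affinecombined}---is precisely what the paper does: it omits the proof entirely and simply says it ``is obtained along the lines of the proof of Corollary~\ref{affinecombined}.'' Your main exposition via the explicit Fourier computation is more detailed than anything in the paper and is also correct; the key algebraic identity in your step~(3), $\partial_{x_k}\varphi^{x_k}_{t,k}(\theta_k)=e^{\beta_{k,k}t}\,i\theta_k\,\varphi^{(k),x_k}_{t,k}(\theta_k)$, is exactly what one gets by undoing and recombining the two $i\theta$-decompositions used in Theorems~\ref{delta} and~\ref{IBP}, so the two routes are really the same argument at different levels of unpacking. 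One small caveat: to invert back in step~(4) you implicitly need $\widehat{\partial_{y_k}f}\in L^1(\mathbb{R}^d)$, which is not among the listed hypotheses in the multidimensional statement (though its one-dimensional analogue $\widehat{f'}\in L^1$ is assumed in Theorem~\ref{IBP}); the paper is equally casual on this point, so it is not a defect of your argument relative to theirs.
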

\begin{proof}[Proof of Theorem \ref{multidimaffinedelta}]
    We will prove the result for the case where $ \beta_{k,k} = 0 $. The proof in the other case is almost the same so we omit it. 

    For simplicity, we consider only the case where $ k=1 $. Then we can rewrite the expectation in the left-hand side of the main result in the theorem as in the one-dimensional case:
    \begin{align*}
        E[ f( X^x_t ) ] &= \int_{\mathbb{R}^d} f(y) \nu_{X^x_t}( dy )    \\
        &=\int_{\mathbb{R}^{d}} \frac{1}{(2\pi)^d} \int_{\mathbb{R}^d} \widehat{f}( \theta ) e^{ i \langle \theta, y \rangle } d\theta\, \nu_{X^x_t}( dy ) \\
        &= \frac{1}{(2\pi)^d} \int_{\mathbb{R}^{d}} \widehat{f}( \theta ) \int_{\mathbb{R}^d} e^{ i \langle \theta, y \rangle } \nu_{X^x_t}( dy ) \, d\theta\ \\
        &= \frac{1}{(2\pi)^d} \int_{\mathbb{R}^{d}} \widehat{f}( \theta ) \varphi_{X^x_t}( \theta ) \, d\theta,
    \end{align*}
    where \( \nu_{X^x_t} \) denotes the distribution of \( X^x_t \), \( \varphi_{X^x_t} \) denotes its characteristic function, and \( \langle \cdot,\cdot \rangle \) denotes the standard inner product on \(\mathbb{R}^d\). 
    
    The remainder of the argument is also straightforward, owing to the simple structure of the SDE. Specifically, \( (X^x)_k \) and \( ((X^x)_1, \dots, (X^x)_{k-1}, (X^x)_{k+1}, \dots, (X^x)_d) \) are independent, \( ((X^{e_k,x})_2,  \dots, (X^{e_k,x})_{k-1}, (X^{e_k,x})_{k+1}, \dots, (X^{e_k,x})_d) \) shares the same distribution as \( ((X^x)_1, \dots, (X^x)_{k-1}, (X^x)_{k+1}, \dots, (X^x)_d) \), and so on.
\end{proof}

The proof of Theorem \ref{multidimaffineIBP} follows the same lines as in the above proof. Similarly, the proof of Corollary \ref{multidimaffinecombined} is obtained along the lines of the proof of Corollary \ref{affinecombined}, so we omit the proofs.

\begin{remark}
    Although the differential equations satisfied by the functions appearing in the characteristic function are known for general multi-dimensional affine diffusions (see e.g. \cite{DFS2003}, Theorem 2.7), their explicit formulas are not available. Therefore, we were not able to derive formulas corresponding to Theorems \ref{multidimaffinedelta}, \ref{multidimaffineIBP}, and Corollary \ref{multidimaffinecombined}. This is the reason why we restrict our attention in this subsection to a particularly simple case. 
\end{remark}

\section{Applications}
\label{sec:5}
\subsection{CIR model}
The class of affine diffusions contains the CIR model, which is a celebrated model for interest rates. The model is given by the following SDE:
\begin{equation}\label{CIR}
    dr_t = k( \theta - r_t ) \,dt + \sigma \sqrt{r_t} \,dW_t.
\end{equation}

Here, $ k, \theta, \sigma \in \mathbb{R}_{>0} $ are parameters, which representing the speed of adjustment to the mean, the mean, and the volatility. We call a process which satisfies the SDE (\ref{CIR}) \emph{a CIR process with parameters \( (\sigma,k,\theta) \)} in this subsection.

In this context, the derivative with respect to the initial value is called \emph{delta} (one of the Greeks) and is used to replicate financial derivatives (See e.g. \cite{Hull}). In view of its importance in application, we rewrite the above results using the symbols of the CIR model.
\begin{corollary}
    Let $ t \in \mathbb{R}_{>0} $ and \( k, \theta, \sigma \in \mathbb{R}_{>0} \). Moreover, let $ r $ and $ r^{ \langle 1 \rangle } $ be CIR processes whose parameters are $ ( \sigma, k, \theta ) $ and $ ( \sigma, k, \theta+\sigma^2/(2k) ) $ respectively, and their initial values are the same positive number. Then, for $ f \in L^1(\mathbb{R}) $ such that \( \widehat{f} \in L^1( \mathbb{R} ) \) and $ f(r_t), f(r^{ \langle 1 \rangle }_t) \in L^1( \Omega ) $, it holds that
    \[
        \partial_{r_0} E[ f( r_t ) ] = \frac{ -2k e^{ -k t } }{ \sigma^2 ( e^{ -k t } -1 ) } \bigl( E[ f( r^{ \langle 1 \rangle }_t) ] - E[ f( r_t ) ] \bigr).
    \]
\end{corollary}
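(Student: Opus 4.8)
The plan is to recognize that the CIR model (\ref{CIR}) is simply an affine diffusion (\ref{1dimAffineSDE}) after a dictionary translation of the parameters, and then to quote Theorem~\ref{delta} directly. Comparing $dr_t = \sigma\sqrt{r_t}\,dW_t + (-k r_t + k\theta)\,dt$ with $dX^x_t = \sqrt{\alpha X^x_t}\,dW_t + (\beta X^x_t + b)\,dt$, one reads off $\alpha = \sigma^2$, $\beta = -k$, and $b = k\theta$. Since $\sigma, k, \theta \in \mathbb{R}_{>0}$, we have $\alpha \in \mathbb{R}_{>0}$, $\beta \in \mathbb{R}$, $b \in \mathbb{R}_{\ge 0}$, so $(\alpha,\beta,b) \in D$, and moreover $\beta = -k \neq 0$, so we are squarely in the second case of Theorem~\ref{delta}.

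The key steps are then: first, verify the parameter identification above and note that the initial value $r_0 = x > 0$ matches the hypothesis $x \in \mathbb{R}_{>0}$. Second, identify the shifted process: Theorem~\ref{delta} uses the affine diffusion $X^{1,x}$ with parameters $(\alpha,\beta,b+\alpha/2) = (\sigma^2, -k, k\theta + \sigma^2/2)$; translating back through the dictionary, this corresponds to a CIR process with volatility $\sigma$, speed $k$, and mean $\theta'$ satisfying $k\theta' = k\theta + \sigma^2/2$, i.e. $\theta' = \theta + \sigma^2/(2k)$ — exactly the process $r^{\langle 1\rangle}$ in the statement. Third, check the integrability hypotheses on $f$: these ($f \in L^1(\mathbb{R})$, $\widehat{f} \in L^1(\mathbb{R})$, $f(r_t), f(r^{\langle 1\rangle}_t) \in L^1(\Omega)$) are literally the same as those of Theorem~\ref{delta} under the identification $X^x_t = r_t$, $X^{1,x}_t = r^{\langle 1\rangle}_t$. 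Fourth, substitute $\alpha = \sigma^2$ and $\beta = -k$ into the coefficient $\dfrac{2\beta e^{\beta t}}{\alpha(e^{\beta t}-1)}$ from the $\beta \neq 0$ case, obtaining $\dfrac{2(-k)e^{-kt}}{\sigma^2(e^{-kt}-1)} = \dfrac{-2k e^{-kt}}{\sigma^2(e^{-kt}-1)}$, which is precisely the constant in the claimed formula.

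There is essentially no obstacle here — this corollary is a pure specialization, and the only thing to be careful about is the bookkeeping of signs (since $\beta = -k < 0$, the factor $e^{-kt} - 1$ in the denominator is negative, but so is the numerator factor $-k$, so the overall constant is positive, consistent with the intuition that delta of a monotone payoff should have a definite sign). One could also remark in passing that since $r$ is genuinely $\mathbb{R}_{\ge 0}$-valued the effective domain of $f$ is $[0,\infty)$, but this does not affect the argument. No approximation or limiting procedure is needed because $\beta \neq 0$ is automatic. Hence the proof reduces to: ``Apply Theorem~\ref{delta} with $(\alpha,\beta,b) = (\sigma^2,-k,k\theta)$ and simplify the constant.''
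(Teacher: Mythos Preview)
Your proposal is correct and matches the paper's approach exactly: the paper presents this corollary as a direct rewriting of Theorem~\ref{delta} in CIR notation, and your parameter dictionary $(\alpha,\beta,b)=(\sigma^2,-k,k\theta)$ together with the identification of $r^{\langle 1\rangle}$ as the shifted process $X^{1,x}$ is precisely that specialization.
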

\begin{corollary}\label{CIRIBP}
    Let $ t \in \mathbb{R}_{>0} $ and \( k, \theta, \sigma \in \mathbb{R}_{>0} \) and assume \( 2 k \theta \ge \sigma^2 \). Moreover, let $ r $ and $ r^{ \langle -1 \rangle } $ be CIR whose parameters are $ ( \sigma, k, \theta ) $ and $ ( \sigma, k, \theta-\sigma^2/(2k) ) $ respectively with the same initial positive values. Then, for $ f \in C^1(\mathbb{R}) $ such that $ f $, $ f^\prime $, \( \widehat{f}, \widehat{f^\prime} \in L^1(\mathbb{R}) \) and $ f(r_t), f(r^{ \langle -1 \rangle }_t), f^\prime(r_t) \in L^1( \Omega ) $, it holds that
    \[
        E[ f^\prime( r_t ) ] =\frac{ -2k }{ \sigma^2 ( e^{ -k t } -1 ) } \bigl( E[ f( r_t) ] - E[ f( r^{ \langle -1 \rangle }_t ) ] \bigr).
    \]
\end{corollary}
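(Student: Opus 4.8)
The plan is to read off the CIR process \eqref{CIR} as a reparametrization of the affine diffusion \eqref{1dimAffineSDE} and then invoke Theorem \ref{IBP} with essentially no extra work. Matching diffusion coefficients, $\sigma\sqrt{r_t} = \sqrt{\alpha X^x_t}$ forces $\alpha = \sigma^2$; matching drifts, $k(\theta - r_t) = (-k)\,r_t + k\theta = \beta X^x_t + b$ forces $\beta = -k$ and $b = k\theta$. Hence a CIR process with parameters $(\sigma, k, \theta)$ is exactly the affine diffusion with parameters $(\alpha, \beta, b) = (\sigma^2, -k, k\theta)$, and since $k, \theta, \sigma > 0$ we have $(\alpha,\beta,b)\in D$ with $\beta = -k \ne 0$, so only the $\beta \ne 0$ branch of Theorem \ref{IBP} will be used.

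Next I would identify the down-shifted process and check the admissibility hypothesis. The shift $b \leadsto b - \alpha/2$ of Theorem \ref{IBP} translates to $k\theta \leadsto k\theta - \sigma^2/2$, that is $\theta \leadsto \theta - \sigma^2/(2k)$; thus the affine diffusion $X^{-1,x}$ is precisely the CIR process $r^{\langle -1\rangle}$ with parameters $(\sigma, k, \theta - \sigma^2/(2k))$ and the same positive initial value. The requirement $(\alpha,\beta,b-\alpha/2)\in D$ in Theorem \ref{IBP} reduces to $b - \alpha/2 \ge 0$, i.e. $k\theta - \sigma^2/2 \ge 0$, i.e. $2k\theta \ge \sigma^2$ --- exactly the Feller-type condition assumed in the corollary. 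Finally, the hypotheses on $f, f', \widehat{f}, \widehat{f'}$ and on $f(r_t), f(r^{\langle -1\rangle}_t), f'(r_t)$ are the CIR-notation restatements of the hypotheses of Theorem \ref{IBP}, so nothing further is needed there.

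With these identifications in place, Theorem \ref{IBP} applied in the case $\beta \ne 0$ yields an identity for $E[f'(r_t)]$ in terms of $E[f(r_t)] - E[f(r^{\langle -1\rangle}_t)]$, and it remains only to substitute $\alpha = \sigma^2$ and $\beta = -k$ into the multiplicative constant of Theorem \ref{IBP} to obtain the coefficient in the claimed identity. There is no genuine obstacle here: the corollary is a direct specialization. The two points that deserve a moment's care are (i) propagating the sign conventions of the affine parametrization ($\beta = -k < 0$, so $e^{-kt}-1 < 0$) correctly through that constant, and (ii) confirming that the Feller condition $2k\theta \ge \sigma^2$ is indeed the precise instance of $b - \alpha/2 \ge 0$ needed so that the down-shifted parameters lie in $D$ and $r^{\langle -1\rangle}$ is a well-defined $\mathbb{R}_{\ge 0}$-valued affine diffusion; everything else is bookkeeping.
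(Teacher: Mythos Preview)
Your approach is exactly the intended one: the paper offers no separate proof for this corollary, and it is meant to be the direct specialization of Theorem~\ref{IBP} under the dictionary $(\alpha,\beta,b)=(\sigma^2,-k,k\theta)$, with the Feller condition $2k\theta\ge\sigma^2$ being precisely $b-\alpha/2\ge 0$. Your identification of $r^{\langle -1\rangle}$ with $X^{-1,x}$ and your check that only the $\beta\neq 0$ branch is relevant are correct.

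One caution when you actually carry out the final substitution you describe in point~(i). The constant in Theorem~\ref{IBP} for $\beta\neq 0$ is $\dfrac{2\beta}{\alpha(e^{\beta t}-1)}$, which under $\alpha=\sigma^2$, $\beta=-k$ becomes $\dfrac{-2k}{\sigma^2(e^{-kt}-1)}$, \emph{without} the extra factor $e^{-kt}$ appearing in the displayed formula of the corollary. (That factor $e^{\beta t}$ is present in the constant of Theorem~\ref{delta} and hence in the preceding CIR corollary, but not in Theorem~\ref{IBP}; one can double-check consistency via Corollary~\ref{affinecombined}, $\partial_x E[f(X^x_t)]=e^{\beta t}E[f'(X^{1,x}_t)]$, which forces the two constants to differ exactly by $e^{\beta t}$.) So your plan is sound, but executing it will not reproduce the constant as printed; this looks like a typographical slip in the statement of the corollary rather than a flaw in your argument.
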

\begin{corollary}
    Let $ t \in \mathbb{R}_{>0} $ and \( k, \theta, \sigma \in \mathbb{R}_{>0} \). Moreover, let $ r $ and $ r^{ \langle 1 \rangle } $ be CIR processes whose parameters are $ ( \sigma, k, \theta ) $ and $ ( \sigma, k, \theta+\sigma^2/(2k) ) $ respectively with the same initial positive value $ r_0 $. Then, for $ f \in C^1(\mathbb{R}) $ such that $ f, f^\prime, \widehat{f}, \widehat{f^\prime} \in L^1(\mathbb{R}) $ and $ f(r_t), f(r^{ \langle 1 \rangle }_t), f^\prime( r^{ \langle 1 \rangle }_t) \in L^1( \Omega ) $, it holds that
    \[
        \partial_{r_0} E[ f( r_t ) ] = e^{ -kt } E[ f^\prime( r^{ \langle 1 \rangle }_t ) ].
    \]
\end{corollary}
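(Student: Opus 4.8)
The plan is to recognize the CIR process as a one-dimensional affine diffusion and then invoke Corollary~\ref{affinecombined} directly; beyond a careful change of parametrization, no new analytic work is required.

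First I would set up the dictionary between the CIR parameters and the affine parameters $(\alpha,\beta,b)$ of the SDE~\eqref{1dimAffineSDE}. Rewriting \eqref{CIR} as $dr_t = (-k\,r_t + k\theta)\,dt + \sqrt{\sigma^2 r_t}\,dW_t$ shows that a CIR process with parameters $(\sigma,k,\theta)$ is exactly the affine diffusion with parameters $(\alpha,\beta,b)=(\sigma^2,-k,k\theta)$. Since $\sigma,k,\theta\in\mathbb{R}_{>0}$ we have $(\sigma^2,-k,k\theta)\in D$, so the hypothesis of Corollary~\ref{affinecombined} on the parameter triple is met, and the common initial value $r_0$ is the required $x\in\mathbb{R}_{>0}$.

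Next I would identify the auxiliary process. In Corollary~\ref{affinecombined} the process $X^{1,x}$ has parameters $(\alpha,\beta,b+\alpha/2)=(\sigma^2,-k,k\theta+\sigma^2/2)$; dividing the drift constant by $k$ gives $k\theta+\sigma^2/2 = k\bigl(\theta+\sigma^2/(2k)\bigr)$, so $X^{1,x}$ is precisely the CIR process $r^{\langle 1\rangle}$ with parameters $(\sigma,k,\theta+\sigma^2/(2k))$ and initial value $r_0$ appearing in the statement. The regularity and integrability assumptions on $f$ in Corollary~\ref{affinecombined} ($f\in C^1(\mathbb{R})$; $f,f',\widehat f,\widehat{f'}\in L^1(\mathbb{R})$; $f(X^x_t),f(X^{1,x}_t),f'(X^{1,x}_t)\in L^1(\Omega)$) translate verbatim into those assumed here once $X^x$ is replaced by $r$ and $X^{1,x}$ by $r^{\langle 1\rangle}$. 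Applying Corollary~\ref{affinecombined} then gives $\partial_{r_0}E[f(r_t)] = e^{\beta t}E[f'(r^{\langle 1\rangle}_t)]$, and substituting $\beta=-k$ (so $e^{\beta t}=e^{-kt}$) yields the claim.

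The only real care needed — and the step most prone to slip — is the bookkeeping of the parametrization: keeping the sign of $\beta$ straight so that the prefactor is $e^{-kt}$ and not $e^{kt}$, and matching the affine shift $b\leadsto b+\alpha/2$ with the CIR shift $\theta\leadsto\theta+\sigma^2/(2k)$. As an alternative route, the same identity follows by combining the first corollary of this subsection (the CIR delta formula) with Theorem~\ref{IBP} in its CIR parametrization applied to $r^{\langle 1\rangle}$, using that shifting $\theta+\sigma^2/(2k)$ down by $\sigma^2/(2k)$ recovers $r$; the algebraic cancellation is exactly the one used to prove Corollary~\ref{affinecombined}.
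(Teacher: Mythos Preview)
Your proposal is correct and matches the paper's approach exactly: the CIR corollaries in this subsection are stated as direct rewritings of the earlier affine results, and this one is obtained precisely by applying Corollary~\ref{affinecombined} under the identification $(\alpha,\beta,b)=(\sigma^2,-k,k\theta)$, so that $b+\alpha/2=k(\theta+\sigma^2/(2k))$ and $e^{\beta t}=e^{-kt}$. The parameter bookkeeping you flagged is the only content, and you have it right.
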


\begin{remark}
     \( 2 k \theta \ge \sigma^2 \), one of the assumptions of Corollary~\ref{CIRIBP}, is known as the \emph{Feller condition}, and it  ensures that the original process \( r \) never hits \(0\).
\end{remark}

\subsection{Model of evolution of population}
The second application concerns a model which is a renormalized limit of  the following branching process which describes the evolution of a population. For details on the following model, see, for example, \cite{Pardoux2016}, Section 3.1.

Let \( X_n \) describe the population at generation $n$ which is defined iteratively as:
\[
    X_{n+1} = \sum_{k=1}^{X_n} \xi_{n,k}, \qquad X_0 = x,
\]
where \(x\) is a positive integer and \( ( \xi_{n,k} )_{n,k} \) are IID random variables which model the descendants of the $k$-th member of the $n$-th generation. 

Now, assume that $ x $ is a positive real number. In order to consider the limit process, we consider $N$ processes of the above type started from a large initial population $[Nx]$ denoted by: \( X^{N,x}, N=1,2,\dots \), where \( X^{N,x}_0 = [Nx] \). These processes may have different offspring distributions. For this reason, we let
\begin{align*}
    \gamma_N &:= N - N E[ \xi^{N}_{0,1} ] \quad\text{i.e.}\quad E[ \xi^{N}_{0,1} ] = 1 + \frac{\gamma_N}{N}, \\
    \sigma^2_N &:= \textrm{Var}( \xi^N_{0,1} ).
\end{align*}

In order to consider the limit population process, we assume
\[
    \gamma_N \to \gamma, \qquad \sigma^2_N \to \sigma^2
\]
for some real numbers \( \gamma \) and \( \sigma^2 > 0 \). Additionally, we assume the following condition:
\[
    E[ |\xi^N_{0,1}|^2 1_{(\xi^N_{0,1} > a \sqrt{N})}]  \to 0 \text{ as } N \to \infty, \text{ for all } a>0.
\]

In this case, the continuous-time process $ \bar{Z}^{N,x}_t := X^{N,x}_{[Nt]} / N $ satisfies the following limit theorem.

\begin{proposition}
    \[
        \bar{Z}^{N,x} \Longrightarrow Z^x,
    \]
    where the convergence is in the Skorohod topology defined 
    in \( D([0,\infty);\mathbb{R}_+) \). Here,  \(Z^x\) solves the SDE
    \[
        d Z^x_t = \gamma Z^x_t dt + \sigma \sqrt{Z^x_t} dB_t,\ t \ge 0,\ Z^x_0 = x. 
    \]
\end{proposition}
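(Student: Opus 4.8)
The plan is to follow the classical martingale-problem route to diffusion approximations of branching processes (cf.\ Ethier--Kurtz or Jacod--Shiryaev, and \cite{Pardoux2016}): produce a semimartingale decomposition of $\bar{Z}^{N,x}$, prove tightness, identify every subsequential limit as the solution of the martingale problem associated with the generator $\mathcal{L}f(z)=\gamma z f'(z)+\tfrac{\sigma^2}{2}z f''(z)$, and conclude by uniqueness. First I would condition on $\mathcal{F}_n:=\sigma(X^{N,x}_0,\dots,X^{N,x}_n)$; since $X^{N,x}_{n+1}$ is a sum of $X^{N,x}_n$ i.i.d.\ copies of $\xi^N_{0,1}$, one has $E[X^{N,x}_{n+1}\mid\mathcal{F}_n]=(1+\gamma_N/N)X^{N,x}_n$ and $\mathrm{Var}(X^{N,x}_{n+1}\mid\mathcal{F}_n)=\sigma_N^2 X^{N,x}_n$. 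Telescoping $X^{N,x}_{[Nt]}$ and dividing by $N$ gives
\[
    \bar{Z}^{N,x}_t = \frac{[Nx]}{N} + \gamma_N\int_0^{[Nt]/N}\bar{Z}^{N,x}_{\lfloor Ns\rfloor/N}\,ds + \mathcal{M}^N_t,\qquad \langle\mathcal{M}^N\rangle_t = \sigma_N^2\int_0^{[Nt]/N}\bar{Z}^{N,x}_{\lfloor Ns\rfloor/N}\,ds,
\]
with $\mathcal{M}^N$ a martingale. Note the drift and bracket formally converge to $\gamma\int_0^t Z^x_s\,ds$ and $\sigma^2\int_0^t Z^x_s\,ds$, matching the candidate SDE.

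Next I would establish moment bounds: the recursion yields $E[X^{N,x}_n]=[Nx](1+\gamma_N/N)^n$ and, similarly, a closed second-moment recursion, so that $\sup_N E[\sup_{s\le T}|\bar{Z}^{N,x}_s|]<\infty$ on every compact $[0,T]$ (via Doob's inequality applied to $\mathcal{M}^N$ together with Gronwall's inequality). This gives tightness of the one-dimensional marginals and C-tightness of the drift and of $\langle\mathcal{M}^N\rangle$. The jumps of $\bar{Z}^{N,x}$ are $(X^{N,x}_{n+1}-X^{N,x}_n)/N$, which are of order $N^{-1/2}$ in probability; the Lindeberg-type assumption $E[|\xi^N_{0,1}|^2\mathbf{1}_{\{\xi^N_{0,1}>a\sqrt N\}}]\to0$ forces $\sup_{s\le T}|\Delta\bar{Z}^{N,x}_s|\to0$ in probability. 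Combining this with Aldous's tightness criterion, applied through the semimartingale decomposition, yields tightness of $\{\bar{Z}^{N,x}\}_N$ in $D([0,\infty);\mathbb{R}_+)$.

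To identify limit points, for $f\in C^3$ with bounded derivatives the process $f(\bar{Z}^{N,x}_t)-f([Nx]/N)-\sum_{n=0}^{[Nt]-1}\big(E[f(X^{N,x}_{n+1}/N)\mid\mathcal{F}_n]-f(X^{N,x}_n/N)\big)$ is a martingale. A second-order Taylor expansion of the conditional increment, using the conditional mean and variance above, identifies the compensator as $\int_0^t\mathcal{L}^Nf(\bar{Z}^{N,x}_s)\,ds$ plus an error, where $\mathcal{L}^Nf(z)\to\mathcal{L}f(z)=\gamma z f'(z)+\tfrac{\sigma^2}{2}z f''(z)$; after truncating $\xi^N_{0,1}$ at level $a\sqrt N$, the Taylor remainder is controlled precisely by the Lindeberg assumption and the moment bounds, so it tends to $0$ uniformly on compacts in probability. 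Passing to a subsequential limit $Y$ (via Skorohod's representation theorem) shows that $f(Y_t)-f(x)-\int_0^t\mathcal{L}f(Y_s)\,ds$ is a martingale for all such $f$, and $Y$ is $\mathbb{R}_+$-valued since each $\bar{Z}^{N,x}\ge0$; thus $Y$ solves the martingale problem for $\mathcal{L}$ started at $x$. Since the diffusion coefficient $\sigma\sqrt{z}$ satisfies $|\sigma\sqrt{x}-\sigma\sqrt{y}|^2\le\sigma^2|x-y|$, the Yamada--Watanabe theorem gives pathwise uniqueness, hence uniqueness in law, for $dZ^x_t=\gamma Z^x_t\,dt+\sigma\sqrt{Z^x_t}\,dB_t$; therefore the martingale problem is well posed, every subsequential limit coincides in law with $Z^x$, and $\bar{Z}^{N,x}\Rightarrow Z^x$.

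The hard part will be the simultaneous control of the jump sizes (needed for tightness) and of the Taylor remainder (needed for the compensator): both rest on a truncation estimate for the random sum $X^{N,x}_{n+1}-X^{N,x}_n=\sum_{k=1}^{X^{N,x}_n}(\xi^N_{n,k}-1)$ of $\approx Nx$ i.i.d.\ terms, and one must make this estimate uniform over $n\le NT$ using only the $\sqrt N$-truncated Lindeberg condition and the moment bounds. An alternative that lightens the martingale-tightness step is to exploit the branching property: $E[e^{-\lambda\bar{Z}^{N,x}_t}]=\phi_N([Nt],\lambda)^{[Nx]}$ for the one-ancestor Laplace transform $\phi_N$, and to show $N\big(1-\phi_N([Nt],\lambda)\big)\to v(t,\lambda)$ where $v$ solves the Riccati equation $\partial_t v=\gamma v-\tfrac{\sigma^2}{2}v^2$, $v(0,\cdot)=\mathrm{id}$, so that $\phi_N([Nt],\lambda)^{[Nx]}\to\exp(-x\,v(t,\lambda))$; this identifies the finite-dimensional limits directly and matches the affine/characteristic-function description of Section \ref{sec:3} with parameters $(\alpha,\beta,b)=(\sigma^2,\gamma,0)$. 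One still needs tightness for the Skorohod statement, but then only C-tightness of a single scalar semimartingale is required.
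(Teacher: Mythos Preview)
Your outline is essentially correct and follows the standard martingale-problem route (as in \cite{Pardoux2016}, Section~3.1, which the paper itself cites for this model). The semimartingale decomposition, moment bounds via Gronwall, Aldous tightness together with the Lindeberg-type jump control, identification of the limit through $\mathcal{L}f(z)=\gamma z f'(z)+\tfrac{\sigma^2}{2}z f''(z)$, and uniqueness by Yamada--Watanabe are all the right ingredients, and your alternative Laplace-transform/Riccati argument is indeed the affine viewpoint consistent with Section~\ref{sec:3}.

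However, there is nothing to compare against: the paper does not prove this proposition. It states the result and immediately attributes it to \cite{Grimvall1974} (and to \cite{Pardoux2016} for background), using it only as a modelling justification for applying the $\partial_\beta$ formula of Section~\ref{sec:3.2} to $Z^x$. So your proposal supplies a genuine proof where the paper offers only a citation; your martingale-problem argument is the textbook approach, while Grimvall's original 1974 proof proceeds via convergence of the iterated generating functions (closer in spirit to the Laplace-transform alternative you mention at the end).
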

The original reference for this result is \cite{Grimvall1974}. In this context, considering the partial derivative of \( E[f(Z^x_t)] \) with respect to \( \gamma \), equation (\ref{beta_E}) can be rewritten as follows:
\begin{align}
    \partial_\gamma E[ f( Z^x_t ) ] = & \frac{ 2 x e^{ \gamma t } }{ \sigma^2 ( e^{ \gamma t } - 1 )^2 } \biggl( ( \gamma t - ( e^{ \gamma t } - 1 ) ) E[ f( Z^{2,x}_t ) ] \notag  \\
    &+ \gamma t ( e^{ \gamma t } - 1 ) E[ f( Z^{1,x}_t ) ] \notag - ( \gamma t - ( e^{ \gamma t } - 1 ) ) E[ f( Z^x_t ) ]  \biggr) \notag
\end{align}
if \( \gamma \neq 0 \) and
\[
    \Bigl. \partial_\gamma E[ f( Z^x_t ) ] \Bigr|_{\gamma=0} = \frac{ x }{ \sigma^2 } \biggl( - E[ f( Z^{2,x}_t ) ] + 2 E[ f( Z^{1,x}_t ) ] - E[ f( Z^x_t ) ] \biggr),
\]
where \( Z^{i,x} \) is the affine diffusion with parameters \( ( \sigma^2, \gamma, i \sigma^2/2 ) \) for \( i=1,2 \).

\end{document}